\documentclass[11pt]{elsarticle}
\usepackage{amssymb}
\usepackage{amsmath}
\usepackage{amsfonts,graphicx,color, bbm}
\usepackage{mathtools}

\newtheorem{theorem}{Theorem}[section]
\newtheorem{lemma}{Lemma}[section]
\newtheorem{corollary}{Corollary}[section]

\newtheorem{remark}{Remark}[section]
\newcommand{\ignore}[1]{}{}
\newproof{pf}{Proof}
\newproof{pot1}{Proof of Theorem \ref{Thm1}}
\newproof{pot2}{Proof of Corollary \ref{Cor1}}
\newproof{pot3}{Proof of Theorem \ref{Thm2}}
\newtheorem{condition}{Condition}

\newcommand\beq{\begin{equation}}
	\newcommand\eeq{\end{equation}}

\makeatletter

\newcommand{\Rmnum}[1]{\expandafter\@slowromancap\romannumeral #1@}

\def\geq{\geqslant}
\def\leq{\leqslant}
\def\1{\mathbbm{1}}
\def\l{\lambda}
\def\C{\mathbb{C}}
\def\P{\mathbb{P}}
\def\E{\mathbb{E}}
\def\N{\mathbb{N}}
\def\R{\mathbb{R}}
\def\X{\mathbb{X}}
\def\w{\widetilde}
\def\Z{\mathbb{Z}}
\def\bfP{\mathbf{P}}
\def\bfn{\boldsymbol{\nu}}

\begin{document}
	\let\today\relax
\date{\quad}
\begin{frontmatter}



\title{Limit theorems for strongly and intermediately supercritical branching processes in Markovian environment with linear fractional offspring distributions}
\author[cor1]{Yinna Ye\corref{c1}}

\cortext[c1]{Corresponding author}
\address[cor1]{Department of Applied Mathematics, School of Mathematics and Physics, Xi'an Jiaotong-Liverpool University, Suzhou 215123, China}	


\begin{abstract}
In this paper, we study the asymptotic behaviour for distribution of supercritical branching process in a Markovian environment with linear fractional offspring distributions. A phase transition in the behaviour of the process is discovered, through investigating the strongly and intermediately strongly supercritical cases.  
\end{abstract}



\begin{keyword}
Supercritical branching process \sep Random environment \sep Markov chain \sep Linear fractional distribution



\MSC[2020]  60J80 \sep 60K37 \sep 60J10

\end{keyword}

\end{frontmatter}
\date{\today}

\section{Introduction and main results}
Branching process in random environment (BPRE) models population evolution where individuals reproduce independently and the offspring distribution fluctuates randomly across generations. The limit theorems for BPRE, especially for branching processes in independent and identically distributed (i.i.d.) environment or stationary and ergodic environment, have been studied widely and intensively since the 1970s. For instance, one may refer to \citet{AK71b,AK71a}, \citet{T77,T88}, \citet{GL01} and \citet{BGK05} for some classical results.

Initially introduced by \citet{AK71b} in 1971, branching process in Markovian environment (BPME) is a generalization of the classical BPRE with i.i.d. or stationary and ergodic environment. In their paper \cite{AK71b}, they found sufficient conditions for the certainty of ultimate exinction (Theorem 4), in the case when the environment process is an irreducible and positive recurrent Markov chain with countable state space. Only until the past decade, there has been some advances in the study of BPME. \citet{LY10} introduced a BPME driven by a semi-Markov chain. They established the asymptotic behaviour for survival probability in the critical regime. \citet{GLL19,GLL22} determined the asymptotic behaviours of survival probability, in both critical and subcritical regimes, for BPME with a different dependence structure than Le Page and Ye's. While, the limit theorems for supercritical BPME have still rarely appeared so far in the literature.

The main objective of this article is to determine the exact asymptotic behaviours for probability distribution of supercritical BPMEs. To this end, we studied the phase transition from strongly to intermediately supercritical BPME with linear fractional offspring distributions in the same settings as \cite {GLL19}; and characterized these regimes with limit theorems. 

Let us start with introduction to linear fractional distributions. A probability distribution (p.m.f) $q=\left(q(k)\right)_{k\geq0}$ on $\N_0:=\N\cup\{0\}$ is called \textit{linear fractional} (l.f.), if it satisfies
\begin{equation}\label{def-l.f.}
	q(0)=a\quad\text{and}\quad q{(k)}=(1-a)(1-p)p^{k-1},\quad\text{for }k\in\N;
\end{equation}
 where $a\in[0,1)$, $p\in(0,1)$ and are the parameters of the l.f. distribution. And its probability generating function (p.g.f) is given by
 $$f(s):=1-\frac{(1-a)(1-s)}{1-ps},\quad s\in[0,1].$$
 Moreover, the expectation and normalized second factorial moment of l.f. distribution are respectively given by
 \begin{equation}\label{basic}
 	m:=f'(1)=\frac{1-a}{1-p}\quad\text{and}\quad \eta:=\frac{f''(1)}{2f'(1)^2}=\frac{p}{1-a}.
 \end{equation}

Now, Let us describe the model of BPME with l.f. offspring distributions. Consider in a particle system, each individual reproduces from generation to generation some random number of children following a random l.f. offspring distribution, which depends on the current state of the random environment. More precisely, consider a time-homogeneous Markov chain $X=(X_n)_{n\geq0}$ defined on the probability space $(\Omega, \mathcal{F}, \P)$, with finite and countable state space $\X$. The Markov chain $X$ represents the random environment in this model. For $i\in\X$, let $\P_i$ be the probability law on $(\Omega, \mathcal{F})$ generated by the finite dimensional distributions of
$(X_n)_{n\geq0}$, starting at $X_0=i$. Denote by $\E$ and $\E_i$ the expectations
associated to $\P$ and $\P_i$, respectively. Let $\mathcal{C}$ be the set of functions mapping from $\X$ to $\C$. Denote by $\bfP$ the Markov kernel of the chain $X$, given by
$$\bfP g(i):=\E_i\left[g(X_1)\right],$$
for any $g\in\mathcal{C}$ and $i\in\X$. Let $Z_n$ denote the total population of the $n$th generation in the particle system. Then BPME $(Z_n)_{n\geq0}$ is defined by the following recursive relation:
\begin{equation}\label{recursive}
Z_0=1\quad\text{and}\quad Z_n=\sum_{j=0}^{Z_{n-1}}\xi_{X_n}^{n,j},\quad n\in\N,
\end{equation}
where given $X_n=i$, the random variable $\xi^{n,j}_i$ represents the number of children produced by the $j$th individual in the $(n-1)$th generation, for $j\in\{1,2,\ldots, Z_{n-1}\}$. Assume that $(\xi^{n,j}_i)_{j,\,n\geq1}$ is an i.i.d. sequence with the common p.m.f $q_i$ and p.g.f $f_i$ defined on the probability space $(\Omega, \mathcal{F},\P)$ and given by, for any $i\in\X$,
$$f_i(s):=1-\frac{(1-a_i)(1-s)}{1-p_i\,s},\quad s\in[0,1];$$
and $(a_i,p_i)$ characterizes a l.f. distribution in the same manner as (\ref{def-l.f.}) with $a_i\in[0,1)$ and $p_i\in(0,1)$. Assume also that the sequence $(\xi^{n,j}_i)_{j,\,n\geq1}$ is independent with the Markov chain $X$. 

\begin{remark}
	In particular, when the state space $\X=\{i_0\}$ is a singleton, then $\displaystyle(\xi^{n,j}_{i_0})_{j,\,n\geq1}$ are i.i.d. with the common p.g.f $f_{i_0}$. In this case, our model becomes a Galton-Watson process with the l.f. offspring distribution characterized by the parameters $(a_{i_0},p_{i_0})$.
\end{remark}

For any $ i\in\X$, let $\rho(i):=\ln f_i'(1)$. Consider the Markov walk $(S_n)_{n\geq0}$ associated with the branching process, defined by 
$$S_0:=0\quad\text{and}\quad S_n:=\ln\left[\prod_{k=1}^{n}f'_{X_i}(1)\right]=\sum_{k=1}^n\rho(X_k),\quad n\in\N.$$
Throughout the paper, assume the following conditions.
\begin{condition} [Irreducible and aperiodic environment]\label{Con1}
	The matrix $\bfP$ is primitive, i.e. there exists constant $k_0\in\N$ such that for any non-negative and non-identically zero function $g\in\mathcal{C}$ and $i\in\X$, it holds $\bfP^{k_0} {g(i)>0}$.
\end{condition}
This condition is equivalent to the one that the Markov chain $X$ is irreducible and aperiodic. From Perron-Frobenius theorem, $\bfP$ has a unique positive invariant probability $\bfn$ on $\X$ and the following convergence holds, for any $(i,j)\in\X^2$,
$$\lim_{n\rightarrow+\infty}\bfP^n(i,j)=\bfn(j).$$ 

\begin{condition}[Non-lattice environment]\label{Con3}
	For any $(\theta, b)\in\R^2$, there exists a path $x_0, \ldots, x_n$ in $\X$ such that 
	$$\bfP(x_0,x_1)\cdots \bfP(x_{n-1},x_n)\,\bfP(x_n,x_0)>0$$ 
	and 
	$$\rho(x_0)+\cdots+\rho(x_n)-(n+1)\theta\notin b\Z.$$
\end{condition}
This condition means that the Markov walk $(S_n)_{n\geq0}$ is non-lattice.

For any $\l\in\R$ and $i\in\X$, let
$$k(\l):=\lim_{n\rightarrow+\infty}\E^{1/n}_i\left(e^{\l S_n}\right).$$
According to \citet{GLL19} (Section 2.4), under Conditions \ref{Con1} and \ref{Con3}, the above limit exists and does not depend on the initial state of the chain $X_0=i$. Moreover, the BPME $(Z_n)_{n\geq0}$ is called supercritical, critical or subcritical according to $k'(0)>0$, $k'(0)=0$ or $k'(0)<0$, respectively. Indeed, for a given $\l\in\R$, $k(\l)$ is related to the transfer operator $\bfP_\l$ defined by
$$\bfP_\l\, g(i):=\bfP\left(e^{\l\rho}g\right)(i)=\E_i\left[e^{\l S_1}g(X_1)\right],$$
for any $g\in\mathcal{C}$ and $i\in\X$. And from Perron-Frobenius theorem, there exists positive function $\nu_\l$ on $\X$ such that $k(\l)$ is an eigenvalue associated to the eigenvector $\nu_\l$ for the transfer operator $\bfP_\l$. As a result, after normalization as $\displaystyle\w{\bfP}_\l g:=\frac{\bfP_\l(g\nu_\l)}{k(\l)\,\nu_\l}$, such a new transfer operator $\w{\bfP}_\l$ becomes Markovian and then also becomes a Markov kernel. There are more detailed properties about the operators $\bfP_\l$ and $\w{\bfP}_\l$ in Section \ref{Sec2.1}.

\begin{condition}[Supercritical branching]\label{Con2}
	For any $i\in\X$, the parameters $\left(a_i,p_i\right)$ of the l.f. offspring distributions satisfy 
	\begin{equation}\label{supercritical}
		0<a_i<p_i<1.
	\end{equation}
\end{condition}
The condition (\ref{supercritical}) implies that for any $i\in\X$ and $k\in\N_0$, $0<q_i(k)<1$. Consequently, by (\ref{basic}), we can see that the expectation $m_i$ and variance $\sigma_i^2$ of the l.f. offspring distribution satisfy respectively
$$m_i:={f_i}'(1)\in(0,+\infty)\quad\text{and}\quad\sigma_i^2:={f_i}''(1)-\left({f_i}'(1)\right)^2\in(0,+\infty),$$
for any $i\in\X$. Moreover, from (\ref{basic}), we have (\ref{supercritical}) implies that $m_i>1$, for any $i\in\X$. As a result, by Lemma 2.15 of \cite{GLL19}, we have $k'(0)=\bfn(0)>0$; i.e. BPME $\{Z_n\}_{n\geq0}$ is supercritical, if  (\ref{supercritical}) is satisfied.

In this paper, the research effort will focus on the supercritical case, that is when $k'(0)>0$. We say that the BPME is \textit{strongly supercritical} if $k'(0)>0$, $k'(-1)>0$, \textit{intermediately supercritical} if $k'(0)>0$, $k'(-1)=0$ and \textit{weakly supercritical} if $k'(0)>0$, $k'(-1)<0$. By Lemma 2.15 of \cite{GLL19}, 
\begin{equation}\label{coherence}
	k'(-1)/k(-1)=\w{\bfn}_{-1}(\rho)=\E_{\w{\bfn}_{-1}}\left[\rho(X_1)\right],
\end{equation}
where $\E_{\w{\bfn}_{-1}}$ is the expectation generated by the finite dimensional distributions of the Markov chain with Markov kernel $\w{\bfP}_{-1}$ in the stationary regime. When $(X_n)_{n\geq1}$ is an i.i.d. sequence with common law $\bfn$, then the model is reduced to branching process in i.i.d. environment. In this case, in addition to (\ref{coherence}), we have
\begin{equation}\label{coherence1}
	\E_{\w{\bfn}_{-1}}\left[\rho(X_1)\right]=\E_{\bfn}\left[\rho(X_1)\,e^{-S_1}\right]=\E_{\bfn}\left[\rho(X_1)\,e^{-\rho(X_1)}\right]=\E_{\bfn}\left[\frac{\ln f'_{X_1}(1)}{f'_{X_1}(1)}\right].
\end{equation}
Since $k(\l)>0$, for any $\l\in\R$, we can see from (\ref{coherence}) and (\ref{coherence1}) that for suppercritical branching processes in i.i.d. environments, our way of classification to strongly, intermediately and weakly supercritical regimes using $k'(-1)$ is equivalent to the one using $\E_{\bfn}\left[\ln f'_{X_1}(1)/f'_{X_1}(1)\right]$ by \citet{B14}, i.e. strongly, intermediately or weakly supercritical regimes according to $\E_{\bfn}\left[\ln f'_{X_1}(1)/f'_{X_1}(1)\right]>0$, $=0$ or $<0$, respectively.

We have the following main results for strongly and intermediately supercritical BPME $(Z_n)_{n\geq0}$ with l.f. distributions, respectively.
\begin{theorem}[Strongly supercritical case] \label{Thm1}
Assume Conditions \ref{Con1}-\ref{Con2} and $k'(-1)>0$. Then there exists positive function $u(i,j)$ defined on $\X^2$, such that for any $(i,j)\in\X^2$ and $z\in\N$, as $n \rightarrow+\infty$,
	$$P_i(Z_n=z,X_n=j)\sim k^n(-1)\,u(i,j).$$
\end{theorem}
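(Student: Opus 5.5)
The plan is to use the fact that linear fractional p.g.f.'s are closed under composition. This gives an exact formula for $P_i(Z_n=z,X_n=j)$ as an expectation over the environment. A change of measure with $\l=-1$ then extracts the factor $k^n(-1)$, and what remains converges to a positive limit because the tilted walk has positive drift.

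\textbf{Step 1 (quenched formula).} Conditionally on the environment, $Z_n$ has p.g.f. $f_{X_1}\circ\cdots\circ f_{X_n}$, and this composition is again linear fractional. Its mean is $e^{S_n}$. Its normalized second factorial moment is
$$\eta_n:=\sum_{k=0}^{n-1}\eta_{X_{k+1}}e^{-S_k},\qquad \eta_i=\frac{p_i}{1-a_i}.$$
To check this, apply the chain rule for $f''$ to the composition and sum the resulting telescoping recursion. The parameters $(A,P)$ of the composed distribution are recovered from (\ref{basic}) via $1-A=M(1-P)$ and $P=\eta(1-A)$, which give
$$1-P=\frac{1}{1+\eta_n e^{S_n}},\qquad 1-A=\frac{e^{S_n}}{1+\eta_n e^{S_n}}.$$
Substituting these into (\ref{def-l.f.}), for $z\in\N$ we get
$$P_i(Z_n=z,X_n=j)=\E_i\Big[e^{-S_n}\,\eta_n^{-2}\big(1+e^{-S_n}\eta_n^{-1}\big)^{-(z+1)};\,X_n=j\Big].$$

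\textbf{Step 2 (change of measure).} The weight $e^{-S_n}$ is exactly what the transfer operator $\bfP_{-1}$ absorbs. By the definition of $\w{\bfP}_{-1}$, the quantity $\E_i[e^{-S_n}G(X_1,\dots,X_n)]$ equals
$$k^n(-1)\,\nu_{-1}(i)\,\w\E_{i}\big[G(X_1,\dots,X_n)/\nu_{-1}(X_n)\big],$$
where $\w\E_i$ denotes expectation for the chain with kernel $\w{\bfP}_{-1}$. Hence
$$\frac{P_i(Z_n=z,X_n=j)}{k^n(-1)}=\frac{\nu_{-1}(i)}{\nu_{-1}(j)}\;\w\E_i\Big[\eta_n^{-2}\big(1+e^{-S_n}\eta_n^{-1}\big)^{-(z+1)};\,X_n=j\Big].$$
Under $\w{\bfP}_{-1}$ the chain remains primitive, with invariant law $\w\bfn_{-1}$. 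By (\ref{coherence}) and the ergodic theorem, $S_n/n\to k'(-1)/k(-1)>0$ almost surely. Consequently $e^{-S_n}\to0$ exponentially fast, and $\eta_n\uparrow\eta_\infty<\infty$ almost surely.

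For domination, note that $\eta_n\geq\eta_{X_1}\geq\inf_i p_i>0$. This infimum is positive when $\X$ is finite, which is the setting I would assume; otherwise a uniform lower bound on $p_i$ must be imposed. So the integrand is bounded by $(\inf_i p_i)^{-2}$.

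\textbf{Step 3 (decoupling, the main obstacle).} The functional $\eta_n$ is determined mainly by the early part of the path, while the event $\{X_n=j\}$ concerns the endpoint, so the two must be shown to become asymptotically independent. I would cut the path at $m=\lfloor n/2\rfloor$ and proceed in three moves.

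\begin{itemize}
\item Replace $\eta_n$ by $\eta_m$ and $e^{-S_n}$ by $0$. The integrand is Lipschitz in $(\eta,e^{-S})$ on the region $\eta\geq\inf_i p_i$. The errors are $\sum_{k\ge m}\eta_{X_{k+1}}e^{-S_k}$ and $e^{-S_n}$, whose $\w\E$-expectations tend to $0$. This follows from the positive drift, using a large deviation bound for the Markov walk under $\w{\bfP}_{-1}$ or simply dominated convergence, since both errors tend to $0$ almost surely and are bounded on the good event.
\item Apply the Markov property at time $m$. This yields $\w\E_i\big[\eta_m^{-2}\,\w{\bfP}_{-1}^{\,n-m}(X_m,j)\big]$.
\item Use $\w{\bfP}_{-1}^{\,n-m}(x,j)\to\w\bfn_{-1}(j)$, which holds by Perron--Frobenius (Condition \ref{Con1} is inherited by $\w{\bfP}_{-1}$), together with $\eta_m\to\eta_\infty$ and bounded convergence.
\end{itemize}

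\textbf{Conclusion.} The limit of the ratio is therefore
$$u(i,j):=\frac{\nu_{-1}(i)\,\w\bfn_{-1}(j)}{\nu_{-1}(j)}\;\w\E_i\big[\eta_\infty^{-2}\big],$$
which is positive and finite. The limit does not depend on $z$, consistent with the statement of Theorem \ref{Thm1}. The delicate point is uniform control of the tail $\sum_{k\ge m}\eta_{X_{k+1}}e^{-S_k}$ under the tilted measure, and this is where the strict positivity $k'(-1)>0$ is used.
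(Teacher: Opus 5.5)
Your proposal is correct and follows essentially the same route as the paper. Both use the explicit linear fractional quenched law, the change of measure with $\lambda=-1$, almost sure convergence from the positive drift under $\w{\bfP}_{-1}$, and decoupling via the Markov property at an intermediate time together with $\w{\bfP}_{-1}^{\,n-m}(x,j)\to\w{\bfn}_{-1}(j)$ and bounded convergence, giving the same $u(i,j)$ (your $\eta_\infty^{-1}$ plays the role of the paper's $q_\infty$). The differences are cosmetic: you work at time $n$ with $m=\lfloor n/2\rfloor$, whereas the paper splits off the last step $X_{n+1}=j$ and takes a double limit; also, the tail control you flag as delicate is actually trivial, since Condition~\ref{Con2} gives $\rho>0$ on the finite state space and hence $e^{-S_k}\le e^{-k\min_i\rho(i)}$ deterministically.
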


\begin{corollary}\label{Cor1}
Assume that the conditions of Theorem \ref{Thm1} are satisfied. Then for every $c\in\N$, $z\in\{1,2\ldots, c\}$ and $(i,j)\in\X^2$,
$$\lim_{n\rightarrow+\infty}\P_i\left(Z_n=z\,|\,1\leq Z_n\leq c\,;\,X_n=j\right)=\frac{1}{c},$$
i.e. the limit distribution is uniform on $\{1,\ldots,c\}$. 
\end{corollary}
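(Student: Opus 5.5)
Corollary \ref{Cor1} follows directly from Theorem \ref{Thm1}, and the plan is to write the conditional probability as a ratio and pass to the limit term by term. Fix $c\in\N$, $z\in\{1,\dots,c\}$ and $(i,j)\in\X^2$. By the definition of conditional probability,
$$\P_i\left(Z_n=z\,|\,1\leq Z_n\leq c\,;\,X_n=j\right)=\frac{\P_i(Z_n=z,X_n=j)}{\sum_{y=1}^{c}\P_i(Z_n=y,X_n=j)}.$$
This is well defined for large $n$, because by Theorem \ref{Thm1} each term in the denominator is eventually positive. Indeed, the statement of Theorem \ref{Thm1} gives $u(i,j)>0$ and $k(-1)>0$.

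Next, apply Theorem \ref{Thm1} to the numerator and to each of the finitely many terms $y=1,\dots,c$ of the denominator. The key point is that the asymptotic equivalent $k^n(-1)\,u(i,j)$ in Theorem \ref{Thm1} does not depend on $z$. So for every $y\in\{1,\ldots,c\}$,
$$\frac{\P_i(Z_n=y,X_n=j)}{k^n(-1)\,u(i,j)}\longrightarrow 1 .$$
Divide both numerator and denominator of the ratio by $k^n(-1)\,u(i,j)$. The numerator then tends to $1$. The denominator is a finite sum of $c$ terms, each tending to $1$, so it tends to $c$. The limit of the ratio is therefore $1/c$, which is the uniform distribution on $\{1,\dots,c\}$.

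There is no real obstacle here: everything rests on Theorem \ref{Thm1}. The only points to check are that the equivalence holds for each fixed $z$ separately, which is harmless since the sum is finite and $c$ is fixed, and that $u(i,j)>0$, so that no division by zero occurs.
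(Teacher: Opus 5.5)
Your proposal is correct and follows the same route as the paper. It writes the conditional probability as $\P_i(Z_n=z,X_n=j)/\sum_{y=1}^{c}\P_i(Z_n=y,X_n=j)$, normalizes by $k^n(-1)$ (your additional division by $u(i,j)$ is immaterial), and uses that the limit $u(i,j)>0$ in Theorem \ref{Thm1} does not depend on $z$, which gives $u(i,j)/(c\,u(i,j))=1/c$.
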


\begin{theorem}[Intermediately supercritical case] \label{Thm2}
	Assume Conditions \ref{Con1}-\ref{Con2} and $k'(-1)=0$. Then there exist positive functions, $u(i)$ and $v(j)$, both defined on $\X$, such that for any $(i,j)\in\X^2$, as $n \rightarrow+\infty$,
	$$P_i(Z_n=1,X_n=j)\sim \frac{k^n(-1)\,u(i)\,v(j)}{\sqrt{n}}.$$
\end{theorem}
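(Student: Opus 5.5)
Since the statement concerns $\P_i(Z_n=1,X_n=j)$, the plan is to write this probability explicitly using the linear fractional structure and then reduce to a conditioned local limit theorem for the Markov walk $(S_n)$ under the changed measure $\w{\bfP}_{-1}$.

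\textbf{Step 1: explicit formula for $\P(Z_n=1\mid X)$.} Compositions of l.f. generating functions are again l.f. Given the environment $X_1,\dots,X_n$, the composition $f_{X_1}\circ\cdots\circ f_{X_n}$ is therefore l.f. with mean $e^{S_n}$. Writing $1/(1-f_{0,n}(s))$ in the standard way gives
$$\frac{1}{1-f_{0,n}(s)}=\frac{e^{-S_n}}{1-s}+\sum_{k=1}^{n}\eta_{X_k}e^{-S_{k-1}},$$
where $\eta_i=p_i/(1-a_i)$ as in (\ref{basic}). From this one reads off
$$\P(Z_n=1\mid X)=\frac{e^{-S_n}}{\bigl(e^{-S_n}+\sum_{k=1}^n\eta_{X_k}e^{-S_{k-1}}\bigr)^2}.$$
Hence
$$\P_i(Z_n=1,X_n=j)=\E_i\Bigl[e^{-S_n}H_n^{-2}\,\1_{\{X_n=j\}}\Bigr],\qquad H_n:=e^{-S_n}+\sum_{k=1}^n\eta_{X_k}e^{-S_{k-1}}.$$

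\textbf{Step 2: change of measure.} Use $\w{\bfP}_{-1}$, under which
$$\E_i\bigl[e^{-S_n}g(X_1,\dots,X_n)\bigr]=k(-1)^n\,\nu_{-1}(i)\,\w{\E}_i\bigl[g\,/\nu_{-1}(X_n)\bigr].$$
This gives
$$\P_i(Z_n=1,X_n=j)=k(-1)^n\frac{\nu_{-1}(i)}{\nu_{-1}(j)}\,\w{\E}_i\bigl[H_n^{-2};X_n=j\bigr].$$
By (\ref{coherence}), $k'(-1)=0$ means that $S_n$ has zero drift under $\w{\bfP}_{-1}$. The walk is non-lattice by Condition \ref{Con3}, and since $\X$ is finite it has all moments. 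It therefore remains to show
$$\w{\E}_i\bigl[H_n^{-2};X_n=j\bigr]\sim\frac{c(i,j)}{\sqrt n},$$
with $c(i,j)$ of product form.

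\textbf{Step 3: $\sqrt n$ asymptotics.} This is the main obstacle. The quantity $H_n$ is an exponential functional of a centered Markov walk. It is small only when $S_{k}$ stays large for all $k$, i.e. $\min_{k\le n}S_k$ is not very negative, and additionally $S_n$ is not too negative relative to that. I would split the path at the time of its minimum, or equivalently use a time reversal from $n$. The sum $\sum_{k}\eta_{X_k}e^{-S_{k-1}}$ is dominated by the initial part near $k=0$. The term $e^{-S_n}$ and the reversed sum are governed by the end of the path. Then I would apply the results of \cite{GLL19} (built on Grama--Le Page--Peigné) for Markov walks conditioned to stay positive: $\w{\P}_i(\tau_y>n)\sim c\,V(i,y)/\sqrt n$, together with the conditioned limit theorem. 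Under $\{\min S>-y\}$ one has $S_n\approx\sqrt n$ in the bulk, so $e^{-S_n}\to0$. The sum converges a.s. to $\sum_{k\ge1}\eta_{X_k}e^{-S_{k-1}}$ along the conditioned process, which is a walk tending to $+\infty$. Combining these, I would decompose
$$\w{\E}_i[H_n^{-2};X_n=j]=\sum_{y}\w{\E}_i\bigl[H_n^{-2};\,-\min_{k\le n}S_k\in[y,y+1),X_n=j\bigr].$$
On each piece $H_n^{-2}\le e^{-2y}\,C$, which gives uniform integrability: the factor $e^{-2y}$ beats the polynomial growth $V(i,y)\lesssim 1+y$. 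Each piece is then asymptotically $\frac{1}{\sqrt n}\times(\text{limit})$. Dominated convergence yields $c/\sqrt n$. The dependence on $j$ splits off as $\bfn$-type stationary weight by a Markov-property argument at an intermediate time $n/2$: the beginning depends on $i$, the end on $j$. This produces the product $u(i)v(j)$.

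Positivity of $u$ and $v$ follows from Condition \ref{Con1} and the positivity of $V$ and $\nu_{-1}$.
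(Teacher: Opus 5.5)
Your route is the paper's, but the one step that carries all the weight is asserted rather than proved. Steps 1--2 are Lemma \ref{quench1} with $z=1$ and Lemma \ref{Lem-ann} at $\l=-1$. Your bound $H_n^{-2}\le Ce^{-2y}$ on $\{-\min_{k\le n}S_k\in[y,y+1)\}$ is exactly how (\ref{eq11}) is obtained, and the product form with $v(j)=\w{\bfn}_{-1}(j)/\nu_{-1}(j)$ is the same.

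\textbf{The main gap.} The step in question is ``each piece is then asymptotically $\frac{1}{\sqrt n}\times(\text{limit})$''. Since each slab is $\{\tau_{y+1}>n\}\setminus\{\tau_y>n\}$, this means that for every level $y$ one has $\sqrt n\,\w{\E}_{-1,i}[H_n^{-2};X_n=j,\tau_y>n]\to c\,\w{V}_{-1}(i,y)\,\w{\E}^+_{-1,i,y}(q_\infty^2)\,\w{\bfn}_{-1}(j)$. This is precisely the content of Lemmas \ref{Lem5}--\ref{Lem8}.

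\begin{itemize}
\item The conditioned limit theorem you can quote (Lemma 2.12 of \cite{GLL19}) handles only functionals $g(X_1,\dots,X_m)$ with $m$ \emph{fixed}, jointly with the final state. Your $H_n^{-2}$ depends on the whole path.
\item So you must show that replacing $H_n$ by $\sum_{k=1}^{m}\eta_{X_k}e^{-S_{k-1}}$ costs $o(n^{-1/2})$ on $\{\tau_y>n\}$, as $n\to\infty$ and then $m\to\infty$. This involves every $k\in[m,n]$, not only the term $e^{-S_n}$.
\item The heuristic ``$S_n\approx\sqrt n$'' does not settle it. The bound $\w{\E}_{-1,i}[e^{-S_k};\tau_y>k]\le c_yk^{-3/2}$, summed over $n/2\le k\le n$, gives only $O(n^{-1/2})$, the same order as the main term.
\item What is needed is the Markov property at intermediate times. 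It produces the extra factor $\w{\P}_{-1,X_k}(\tau_{y+S_k}>n-k)\le c(1+y+S_k)/\sqrt{n-k}$, which you combine with the $k^{-3/2}$ decay of $\w{\E}^+_{-1,i,y}(e^{-S_k})$ (Lemma 2.13 of \cite{GLL19}).
\end{itemize}

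The paper organizes this in three steps:
\begin{itemize}
\item the $L^1(\w{\P}^+_{-1,i,y})$ convergence in Lemma \ref{Lem5};
\item the Markov property at $[\theta n]$ in Lemma \ref{Lem6}(2);
\item the comparison of the conditionings $\{\tau_y>p+1\}$ and $\{\tau_y>n+1\}$ with $p=[\theta n]$, letting $\theta\to1$, in Lemma \ref{Lem8}.
\end{itemize}

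\textbf{Smaller points.}
\begin{itemize}
\item Positivity of $u$ does not follow from Condition \ref{Con1} and $\w{V}_{-1}>0$ alone. It needs $q_\infty>0$ $\w{\P}^+_{-1,i,y}$-a.s., i.e.\ $\sum_k e^{-S_k}<\infty$. A conditioned walk that merely tends to $+\infty$ does not guarantee this. The paper uses $\w{\E}^+_{-1,i,y}\bigl(\sum_k e^{-S_k}\bigr)<\infty$, again from Lemma 2.13 of \cite{GLL19}.
\item Dominated convergence over your slabs requires $\sqrt n\,\w{\P}_{-1,i}(\tau_{y+1}>n)\le c(1+y)$ uniformly in $n$ (Statement 2 of Proposition 2.7 in \cite{GLL19}), not just the growth of $\w{V}_{-1}$. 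With that bound, summing the slab limits is a legitimate variant of the paper's monotone-in-$y$ sandwich (\ref{eq15}).
\item The time reversal and the split at the minimum are not needed.
\end{itemize}
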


 We can see that in the i.i.d. environment case with common law $\bfn$, Theorems 2.1.1, 2.1.2 and 2.2.1 in \cite{B14}, with therein $\gamma:=\E_{\bfn}\left(e^{-\rho(X_1)}\right)=k(-1)$, are respectively analogues to Theorem \ref{Thm1}, Corollary \ref{Cor1} and Theorem \ref{Thm2}.

Throughout the paper, $c$ denotes a positive constant whose value may vary from line to line. And $a\vee b:=\max\{a,b\}$, for any $(a,b)\in\R^2$.

The paper is organized as follows. Section \ref{Sec2} provides some preliminary results. The proofs of Theorem \ref{Thm1} and Corollary \ref{Cor1} in the strongly supercritical case are given in Section \ref{sec-strong}. And Section \ref{sec-inter} demonstrates Theorem \ref{Thm2} in the intermediately supercritical case. 
\section{Preliminary results on the associated Markov walk}\label{Sec2}
This section is aimed to provide some preliminary results, including introduction to change of measure technique and some basic properties on the quenched and annealed laws of $(Z_n)_{n\geq0}$.
\subsection{The change of measure related to the Markov walk}\label{Sec2.1}
In this section, we will introduce an exponential change of the probability measure. This change of measure technique plays important role for the proofs of the main results in this article. For the detailed spectral properties of the new operators created below, one may refer to Section 2.4 in \cite{GLL19}. 

For any $\l\in\R$, let $\bfP_\l$ be the transfer operator on $\mathcal{C}$, defined by
\begin{equation}\label{def-g-l}
\bfP_\l g(i):=\bfP\left(e^{\l\rho}g\right)(i)=\E_i\left[e^{\l S_1}g(X_1)\right],
\end{equation}
for any $g\in\mathcal{C}$ and $i\in \X$. Following the same lines (Equations (2.23) -- (2.27)) as in Section 2.4 of \cite{GLL19}, we can obtain that for a given $\l\in\R$, the matrix $\bfP_\l$ satisfies Condition 1. Moreover, by Perron-Frobenius theorem, there exist positive number $k(\l)>0$, positive function $\nu_\l$ on $\X$ and positive linear form $\bfn_\l:\mathcal{C}\rightarrow \C$, such that $\bfn_\l(\nu_\l)=1$ and $k(\l)$ is an eigenvalue associated to the eigenvector $\nu_\l$, i.e. $k(\l)$ and $\nu_\l$ satisfy for any $i\in\X$,
$$\bfP_\l\,\nu_\l(i)=k(\l)\,\nu_\l(i).$$
Note that when $\l=0$, we have $k(0)=1$, $\nu_0(i)=1$ and $\bfn_0(i)=\bfn(i)$, for any $i\in\X$.

For any $\l\in\R$, define a new Markov operator $\w{\bfP}_\l$ by
$$\w{\bfP}_\l g(i):=\frac{\bfP_\l(g\,\nu_\l)(i)}{k(\l)\,\nu_\l(i)},$$
for any $g\in\mathcal{C}$ and $i\in\X$; and from (\ref{def-g-l}), we have
$$\w{\bfP}_\l g(i)=\frac{\bfP\left(e^{\l\rho} g\,\nu_\l\right)(i)}{k(\l)\,\nu_\l(i)}=\frac{\E_{i}\left[e^{\l S_1}g(X_1)\,\nu_\l(X_1)\right]}{k(\l)\nu_\l(i)}.$$
In particular, when $\l=0$, we can see that $\w{\bfP}_0=\bfP_0=\bfP$.
 
\begin{remark}\label{Rem1}
According to Lemma 2.14 of \cite{GLL19} and what follows, we have if Conditions \ref{Con1} and \ref{Con3} are assumed for $\bfP$, then they are also satisfied for the operator $\w{\bfP}_\l$. Furthermore, for $\l\in\R$, as  Markov kernel, $\w{\bfP}_\l$ has a positive invariant measure $\w{\bfn}_\l$ given by
$$\w{\bfn}_\l(g)=\bfn_\l(g\nu_\l),$$
for any $g\in\mathcal{C}$. And for any $(i,j)\in\X^2$, the following convergence holds
\begin{equation}\label{invariant}
	\lim_{n\rightarrow+\infty}\w{\bfP}^n_\l(i,j)=\w{\bfn}_\l(j).
\end{equation}
\end{remark}

For $\l\in\R$ and $i\in\X$, let $\w{\P}_{\l,i}$ and $\w{\E}_{\l,i}$ be the probability and expectation respectively generated by the finite dimensional distributions of the Markov chain $(X_n)_{n\geq0}$ with Markov kernel $\w{\bfP}_\l$ and starting at $X_0=i$. That is,
\begin{equation}\label{eq-change}
	\w{\E}_{\l,i}\left[g(X_1,\ldots,X_n)\right]:=\frac{\E_{i}\left[e^{\l S_n}g(X_1,\ldots,X_n)\,\nu_\l(X_n)\right]}{k^n(\l)\nu_\l(i)},
\end{equation}
for any $n\in\N$, function $g:\X^n\rightarrow \C$ and $i\in\X$. In particular, when $\l=0$, we can see that $\w{\P}_{0,i}$ is reduced to $\P_i$.

\subsection{Quenched and annealed laws of the BPME}\label{Sec2.2}
In this section, we will study the quenched and annealed laws of the BPME $\{Z_n\}_{n\geq0}$, respectively.

Let's start with some notations, which will be used throughout the paper. For any $n\in\N$ and $s\in [0,1)$, set
\begin{equation}\label{qn}
	q_n(s):=1-f_{X_1}\circ\cdots\circ f_{X_n}(s)\quad\text{and}\quad q_n:=q_n(0).
\end{equation}
Under Condition \ref{Con2}, we have for any $i\in\X$ and $s\in[0,1)$, $f_i(s)\in(0,1)$ and the function $s\mapsto f_{X_1}\circ\cdots\circ f_{X_n}(s)$ is strictly increasing. Consequently, by (\ref{qn}), we can find that for any $n\in\N$ and $j\in X$,
\begin{equation}\label{qn-range}
0<q_n(j)<q_n<1.
\end{equation}
Moreover, Lemma 2.1 of \cite{GLL19} implies that for any $i\in\X$,
$$\P_i(Z_n>0\,|\,X_1,\dots, X_n)=q_n.$$
Taking expectation in both sides yields
$$\P_i(Z_n>0)=\E_i(q_n).$$
For any $n\in\N$, $k\in\{1,\ldots,n\}$, $i\in\X$ and $s\in[0,1)$, set 
$$f_{k,n}(s):=f_{X_k}\circ\cdots \circ f_{X_n}(s),\quad\text{and}\quad f_{n+1,n}(s):=s;$$
$$\eta(k):=\eta_{X_k}.$$
We have the following lemma, which gives explicit expressions for $q^{-1}_n(s)$ and $q^{-1}_n$, respectively.

\begin{lemma}\label{Lem-exp}
	For any $n\in\N$ and $s\in[0,1)$,
	\begin{equation}\label{qn-inverse}
		q^{-1}_n(s)=\frac{e^{-S_n}}{1-s}+\sum_{k=0}^{n-1}\eta(k+1)\,e^{-S_k},
	\end{equation}
	In particular, for any $n\in\N$,
	\begin{equation}\label{qn-exp}
		q^{-1}_n=e^{-S_n}+\sum_{k=0}^{n-1}\eta(k+1)\,e^{-S_k}.
	\end{equation}
	If assume in addition Condition \ref{Con2}, then the random variable $\eta(k)$ satisfies, for any $k\in\N$,
	\begin{equation}\label{range-eta}
		0<\eta(k)<\eta,
	\end{equation}
where $\displaystyle\eta:=\max_{i\in\X}\eta_i$.
\end{lemma}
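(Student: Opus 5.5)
The identity (\ref{qn-inverse}) will follow from the Möbius structure of linear fractional generating functions, so I will prove it by induction on $n$, composing from the inside out. First I rewrite each p.g.f.\ in the form
$$\frac{1}{1-f_i(s)}=\frac{1-p_i s}{(1-a_i)(1-s)}.$$
Splitting $1-p_is=(1-p_i)+p_i(1-s)$ and using $m_i=(1-a_i)/(1-p_i)$ and $\eta_i=p_i/(1-a_i)$ from (\ref{basic}), this becomes
$$\frac{1}{1-f_i(s)}=\frac{1}{m_i}\cdot\frac{1}{1-s}+\eta_i.$$
This key one-step identity holds for every $s\in[0,1)$ and every $i\in\X$.

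For the induction, note that $q_n(s)=1-f_{X_1}(f_{2,n}(s))$. Applying the one-step identity with $i=X_1$ and argument $f_{2,n}(s)\in[0,1)$ gives
$$q_n^{-1}(s)=e^{-\rho(X_1)}\,\frac{1}{1-f_{2,n}(s)}+\eta(1).$$
The factor $1/(1-f_{2,n}(s))$ is the same type of expression for the shifted sequence $X_2,\dots,X_n$. Iterating, or more formally doing induction on the length of the composition with the environment sequence fixed, yields
$$q_n^{-1}(s)=\eta(1)+e^{-\rho(X_1)}\Big(\eta(2)+e^{-\rho(X_2)}\big(\cdots\big(\eta(n)+e^{-\rho(X_n)}\tfrac{1}{1-s}\big)\cdots\big)\Big).$$
Expanding the nested expression, the coefficient of $\eta(k+1)$ is $e^{-\rho(X_1)-\cdots-\rho(X_k)}=e^{-S_k}$, with $S_0=0$. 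The last term is $e^{-S_n}/(1-s)$. This is exactly (\ref{qn-inverse}). To keep the bookkeeping clean, I will set up the induction as a statement about $1/(1-f_{k,n}(s))$ for all $k$, proved by downward induction on $k$ from $k=n+1$, where $f_{n+1,n}(s)=s$.

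Setting $s=0$ in (\ref{qn-inverse}) gives (\ref{qn-exp}) immediately.

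For (\ref{range-eta}), Condition \ref{Con2} gives $0<a_i<p_i<1$, so $\eta_i=p_i/(1-a_i)>0$. Hence $\eta(k)=\eta_{X_k}>0$ and $\eta(k)\le\max_i\eta_i=\eta$. As stated, the inequality is strict, $\eta(k)<\eta$, but that cannot hold pointwise when $X_k$ attains the maximizing state. I expect this to be the only real obstacle, and it is one of interpretation rather than proof: I would read it as $0<\eta(k)\le\eta$, or take $\eta$ to be any constant strictly exceeding $\max_i\eta_i$. Both readings suffice for later uses. If $\X$ is countable rather than finite, I would interpret $\eta$ as $\sup_i\eta_i$ and note that $\eta_i=p_i/(1-a_i)<1/(1-a_i)$, so finiteness needs $a_i$ bounded away from $1$. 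The algebra itself is routine.
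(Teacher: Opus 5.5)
Your proof is correct and is essentially the paper's: the paper shows $g_i(s):=\frac{1}{1-f_i(s)}-\frac{1}{f_i'(1)(1-s)}\equiv\eta_i$, which is exactly your one-step identity, and then writes $q_n^{-1}(s)=\frac{e^{-S_n}}{1-s}+\sum_{k=1}^{n}e^{-S_{k-1}}\,g_{X_k}\circ f_{k+1,n}(s)$ as a telescoping sum, which is your downward induction written out in one line. Your remark on (\ref{range-eta}) is also right: the paper asserts the strict inequality $\eta(k)<\eta$ without comment, but it fails whenever $X_k$ is a maximizing state; only $0<\eta(k)\le\eta$ is used later (e.g.\ in Lemma \ref{Lem5}).
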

\begin{pf}
	For any $i\in\X$, define a function $g_i$ on $[0,1)$ as 
	$$g_i(s):=\frac{1}{1-f_i(s)}-\frac{1}{f'_i(1)(1-s)}.$$
	Since for any $i\in\X$, 
	$$f_i(s)=1-\frac{(1-a_i)(1-s)}{1-p_is}\quad\text{and}\quad f'_i(1)=\frac{1-a_i}{1-p_i},$$
	we get for any $s\in[0,1)$,
	\begin{equation}\label{eq-g}
		g_i(s)=\eta_i.
	\end{equation}
	Then we have
	\begin{align*}
		q^{-1}_n(s)&=\left[1-f_{X_1}\circ\cdots\circ f_{X_n}(s)\right]^{-1}\notag\\
		&=\frac{e^{-S_n}}{1-s}+\sum_{k=1}^{n}\left[\frac{e^{-S_{k-1}}}{1-f_{k,n}(s)}-\frac{e^{-S_k}}{1-f_{k+1,n}(s)}\right]\notag\\
		&=\frac{e^{-S_n}}{1-s}+\sum_{k=1}^{n}e^{-S_{k-1}}g_{X_k}\circ f_{k+1,n}(s). 
	\end{align*}
	Combining the last equality with (\ref{eq-g}), we come to the result (\ref{qn-inverse}). 
	
	Moreover, by (\ref{eq-g}) and the fact that under Condition \ref{Con2}, $\eta_i=\frac{p_i}{1-a_i}>0$, for any $i\in\X$; we thus obtain $0<\eta(k)<\eta$, for any $k\in\N$.  
\end{pf}

For any $n\in\N$, consider now random variables defined by
\begin{equation}\label{def-h}
	H_n:=\frac{\sum_{k=0}^{n-1}\eta(k+1)\,e^{-S_k}}{e^{-S_k}+\sum_{k=0}^{n-1}\eta(k+1)\,e^{-S_k}}.
\end{equation}
Using Lemma \ref{Lem-exp}, we can obtain the following lemma, which gives an expression for the quenched law of $Z_n$.
\begin{lemma}\label{quench1}
	For any $i\in\X$ and $(n,z)\in\N^2$,
	$$\P_i\left(Z_n=z\,|\,X_1,\cdots,X_n\right)=e^{-S_n}\,q_n^2\,H_n^{z-1}.$$
\end{lemma}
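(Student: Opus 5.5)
The plan is to use that, conditionally on the environment $X_1,\dots,X_n$, the generation size $Z_n$ is the $n$-fold composition of linear fractional laws, which is again linear fractional. By Lemma 2.1 of \cite{GLL19} (quenched independence), the conditional p.g.f. of $Z_n$ given the environment is $F_n(s):=f_{X_1}\circ\cdots\circ f_{X_n}(s)=1-q_n(s)$. The class of l.f. p.g.f.'s is closed under composition, since these are Möbius maps fixing $1$. Hence $F_n$ has the form $1-\frac{(1-A_n)(1-s)}{1-P_ns}$ for some random parameters $A_n,P_n\in(0,1)$. It remains to identify $A_n$ and $P_n$ in terms of $q_n$, $S_n$ and $H_n$.

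\textbf{Step 1: identify $1-A_n$.} We have $1-A_n=1-F_n(0)=q_n$.

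\textbf{Step 2: identify $P_n$.} Use Lemma \ref{Lem-exp}. Write $R_n:=\sum_{k=0}^{n-1}\eta(k+1)e^{-S_k}$, so that
$$q_n^{-1}(s)=\frac{e^{-S_n}}{1-s}+R_n=\frac{e^{-S_n}+R_n(1-s)}{1-s}.$$
This gives
$$q_n(s)=\frac{(1-s)}{e^{-S_n}+R_n-R_ns}=\frac{(1-s)\,q_n}{1-R_nq_n s},$$
where we used $q_n^{-1}=e^{-S_n}+R_n$ from (\ref{qn-exp}). Comparing with the l.f. form gives $P_n=R_nq_n=\frac{R_n}{e^{-S_n}+R_n}$. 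This is exactly $H_n$ as defined in (\ref{def-h}), reading the first term of the denominator there as $e^{-S_n}$, which is clearly the intended meaning.

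\textbf{Step 3: read off the probabilities.} By (\ref{def-l.f.}), for $z\in\N$,
$$\P_i(Z_n=z\mid X_1,\dots,X_n)=(1-A_n)(1-P_n)P_n^{z-1}=q_n(1-H_n)H_n^{z-1}.$$
From $q_n^{-1}=e^{-S_n}+R_n$ we get $1-H_n=\frac{e^{-S_n}}{e^{-S_n}+R_n}=e^{-S_n}q_n$. Substituting gives $e^{-S_n}q_n^2H_n^{z-1}$, as claimed.

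One can also avoid the closure-under-composition argument by extracting coefficients directly. Expand $1-q_n(s)=1-q_n(1-s)\sum_{m\ge0}(H_ns)^m$ and take the coefficient of $s^z$, which is $q_n(1-H_n)H_n^{z-1}$.

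The only delicate points are two. First, one must justify that the quenched p.g.f. of $Z_n$ is $F_n$, which is the standard branching property together with the independence of the $\xi$'s from $X$ (Lemma 2.1 of \cite{GLL19}). Second, one must check $H_n\in(0,1)$ so that the series expansion is valid; this holds because $R_n>0$ by (\ref{range-eta}). Neither is a real obstacle; the main work is the algebra in Step 2.
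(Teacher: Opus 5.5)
Your proposal is correct and is essentially the paper's argument: the paper gets $\P_i(Z_n=z\,|\,X_1,\ldots,X_n)=e^{-S_n}H_n^{z-1}/[e^{-S_n}+\sum_{k=0}^{n-1}\eta(k+1)e^{-S_k}]^2$ by invoking Kozlov's computation for compositions of linear fractional generating functions and then rewrites it via (\ref{qn-exp}), whereas you carry out that computation explicitly from (\ref{qn-inverse}), which makes the proof self-contained. Your reading of the denominator of $H_n$ as $e^{-S_n}+\sum_{k=0}^{n-1}\eta(k+1)e^{-S_k}$ is the intended one, as the identity $H_n=1-q_ne^{-S_n}$ used later in the proof of Lemma \ref{Lem-ann} confirms.
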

\begin{pf}
	Suppose that $z\in\N$. Using the same method as the one for Equation (6) in \cite{K06} (P. 156), we can obtain
	$$\P_i\left(Z_n=z\,|\,X_1,\cdots,X_n\right)=\frac{e^{-S_n}H_n^{z-1}}{\left[e^{-S_n}+\sum_{k=0}^{n-1}\eta(k+1)\, e^{-S_k}\right]^2}.$$ 	
	By (\ref{qn-exp}) of Lemma \ref{Lem-exp}, we can find immediately the result.
\end{pf}

For any $n\in\N$ and $j\in\X$, let
\begin{equation}\label{def-qn}
	q_n(j):=q_n(f_j(0))=q_n(a_j)
\end{equation}
and
$$G_n(j):=1-e^{-\rho(j)}\,q_n(j)\,e^{-S_n}.$$
By (\ref{qn-inverse}), we have 
\begin{equation}\label{exp-qnj}
	q_n(j):=\left[\frac{e^{-S_n}}{1-a_j}+\sum_{k=0}^{n-1}\eta(k+1)e^{-S_k}\right]^{-1}.
\end{equation}
Since 
\begin{equation}\label{eq-exp}
	e^{-\rho(j)}=m^{-1}_j=\frac{1-p_j}{1-a_j},
\end{equation}
we have the following formula
\begin{equation}\label{exp-gn}
	G_n(j)=1-m_j^{-1}\,q_n(j)\,e^{-S_n}.
\end{equation}

Using Lemma \ref{quench1} and the change of measure, we can obtain a general expression of the annealed law, as stated in the following lemma.

\begin{lemma}\label{Lem-ann}
	For any $(i,j)\in\X^2$, $(n,z)\in\N^2$ and $\l\in\R$,
	$$\P_i(Z_{n+1}=z,X_{n+1}=j)=\frac{k^{n+1}(\l)\,\nu_\l(i)}{m_j^{\l+1}\,\nu_\l(j)}\,\w{\E}_{\l,i}\left[e^{-(\l+1)S_n}\,q_n^2(j)\,G_n^{z-1}(j);\,X_{n+1}=j\right].$$
\end{lemma}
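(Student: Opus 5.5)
The plan is to start from the quenched formula of Lemma \ref{quench1} at time $n+1$, rewrite every quantity on the event $\{X_{n+1}=j\}$ in terms of the first $n$ steps of the environment, and then pass to the tilted measure $\w{\P}_{\l,i}$ using (\ref{eq-change}) at time $n+1$.

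\emph{Step 1 (quenched formula).} Taking expectations in Lemma \ref{quench1} at time $n+1$ and using that $\1_{\{X_{n+1}=j\}}$ is measurable with respect to $X_1,\dots,X_{n+1}$, we get
$$\P_i(Z_{n+1}=z,X_{n+1}=j)=\E_i\left[e^{-S_{n+1}}\,q_{n+1}^2\,H_{n+1}^{z-1};\,X_{n+1}=j\right].$$

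\emph{Step 2 (peeling off the last generation).} On $\{X_{n+1}=j\}$ we rewrite each factor.
\begin{itemize}
\item The walk satisfies $S_{n+1}=S_n+\rho(j)$, so $e^{-S_{n+1}}=m_j^{-1}e^{-S_n}$ by (\ref{eq-exp}).
\item Since $f_j(0)=a_j$, we have $q_{n+1}=1-f_{X_1}\circ\cdots\circ f_{X_n}(f_j(0))=q_n(a_j)=q_n(j)$ by (\ref{def-qn}).
\item For $H_{n+1}$ we read (\ref{def-h}) with the evident intended denominator $e^{-S_n}+\sum_{k=0}^{n-1}\eta(k+1)e^{-S_k}$, which equals $q_n^{-1}$ by (\ref{qn-exp}). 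This gives the identity $H_n=1-e^{-S_n}q_n$, consistent with Lemma \ref{quench1}.
\item Applying this identity at time $n+1$ gives
$$H_{n+1}=1-e^{-S_{n+1}}q_{n+1}=1-m_j^{-1}e^{-S_n}q_n(j)=G_n(j),$$
by (\ref{exp-gn}).
\end{itemize}
Hence
$$\P_i(Z_{n+1}=z,X_{n+1}=j)=m_j^{-1}\,\E_i\left[e^{-S_n}\,q_n^2(j)\,G_n^{z-1}(j);\,X_{n+1}=j\right].$$

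\emph{Step 3 (change of measure).} The integrand is a function of $(X_1,\dots,X_{n+1})$. Applying (\ref{eq-change}) with $n+1$ in place of $n$ to the function $g\,e^{-\l S_{n+1}}/\nu_\l(X_{n+1})$ yields
$$\E_i[g]=k^{n+1}(\l)\,\nu_\l(i)\,\w{\E}_{\l,i}\left[e^{-\l S_{n+1}}\,g/\nu_\l(X_{n+1})\right].$$
On $\{X_{n+1}=j\}$ we have $e^{-\l S_{n+1}}=m_j^{-\l}e^{-\l S_n}$ and $\nu_\l(X_{n+1})=\nu_\l(j)$. Combining these with the factor $m_j^{-1}$ from Step 2 produces the prefactor $k^{n+1}(\l)\nu_\l(i)/(m_j^{\l+1}\nu_\l(j))$ and the weight $e^{-(\l+1)S_n}$ inside the expectation, which is exactly the claimed formula.

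The only delicate point is Step 2: recognising that $H_{n+1}$ coincides with $G_n(j)$. This needs the correct reading of the denominator in (\ref{def-h}) (with $e^{-S_n}$) and the bookkeeping identity $q_{n+1}=q_n(a_j)$ on $\{X_{n+1}=j\}$. Everything else is direct substitution.
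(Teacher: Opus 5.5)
Your proposal is correct and is essentially the paper's proof: both start from Lemma \ref{quench1} at time $n+1$, use $H_{n+1}=1-q_{n+1}e^{-S_{n+1}}$ together with $q_{n+1}=q_n(j)$ and $S_{n+1}=S_n+\rho(j)$ on $\{X_{n+1}=j\}$, and apply (\ref{eq-change}) at time $n+1$ to the integrand divided by $e^{\l S_{n+1}}\nu_\l(X_{n+1})$. The only difference is cosmetic: the paper changes measure first and simplifies on the event afterwards. Your reading of the denominator of (\ref{def-h}) as $e^{-S_n}+\sum_{k=0}^{n-1}\eta(k+1)e^{-S_k}$ is exactly what the paper implicitly uses when it writes $H_n=1-q_ne^{-S_n}$.
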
  
\begin{pf}
	Fix $(i,j)\in\X^2$ and $(n,z)\in\N^2$. On the one hand, by Lemma \ref{quench1}, we have  
	\begin{equation*}
		\P_i(Z_{n+1}=z,X_{n+1}=j)=\E_i\left(e^{-S_{n+1}}\,q^2_{n+1}\,H^{z-1}_{n+1}\,;\,X_{n+1}=j\right).
	\end{equation*}
Using the change of measure (\ref{eq-change}) and taking therein
\begin{equation}\label{eq} g(X_1,\ldots,X_{n+1})=\frac{e^{-(\l+1)S_{n+1}}\,q^2_{n+1}\,H^{z-1}_{n+1}\,\1_{\{X_{n+1}=j\}}}{\nu_{\l}(X_{n+1})},
\end{equation}
we obtain that for any $\l\in\R$,
\begin{equation}\label{eq-1}
\P_i(Z_{n+1}=z,X_{n+1}=j)=k^{n+1}(\l)\,\nu_\l(i)\,\w{\E}_{\l,i}\left[g(X_1,\ldots,X_{n+1})\right].
\end{equation}

On the other hand, by (\ref{qn-exp}) and (\ref{def-h}), we have
$$H_n=1-q_ne^{-S_n}.$$  
Using the last equality, (\ref{eq-exp}), (\ref{eq}) and the fact that on the event $\{X_{n+1}=j\}$, we have $q_{n+1}=q_n\left(f_{X_{n+1}}(0)\right)=q_n(j)$ and $S_{n+1}=S_n+\rho(X_{n+1})=S_n+\rho(j)$; we obtain
$$\w{\E}_{\l,i}\left[g(X_1,\ldots,X_{n+1})\right]=\frac{1}{m_j^{\l+1}\,\nu_\l(j)}\,\w{\E}_{\l,i}\left[e^{-(\l+1)S_n}\,q_n^2(j)\,G^{z-1}_n(j)\,;\,X_{n+1}=j\right].$$
Combining the last equality with (\ref{eq-1}), we thus obtain immediately the result.
\end{pf}

\section{Proofs in the strongly supercritical case} \label{sec-strong}
Assume the hypotheses of Theorem \ref{Thm1}, that is, Conditions \ref{Con1}-\ref{Con2} and $k'(-1)>0$. In this section, we will use the change of measure (\ref{eq-change}), when $\l=-1$. In other words, define the probability law $\w{\P}_{-1,i}$ through its expectation $\w{\E}_{-1,i}$ as follows
\begin{equation}\label{change-measure-1}
	\w{\E}_{-1,i}\left[g(X_1,\ldots,X_n)\right]:=\frac{\E_{i}\left[e^{- S_n}g(X_1,\ldots,X_n)\,\nu_{-1}(X_n)\right]}{k^n(-1)\nu_{-1}(i)},
\end{equation} 
for any $n\in\N$, function $g:\X^n\rightarrow \C$ and $i\in\X$. 

Applying Lemma \ref{Lem-ann} with $\l=-1$ accordingly, we have the annealed law is given by
\begin{equation}\label{exp-ann--1}
\P_i(Z_{n+1}=z,X_{n+1}=j)=\frac{k^{n+1}(-1)\,\nu_{-1}(i)}{\nu_{-1}(j)}\,\w{\E}_{-1,i}\left[q_n^2(j)\,G_n^{z-1}(j);\,X_{n+1}=j\right].
\end{equation}

For any $j\in\X$, consider the following random variables
\begin{align}
q_\infty&:=\left[\sum_{k=0}^{+\infty}\eta(k+1)\,e^{-S_k}\right]^{-1}\label{def-lim-0}\\
q_\infty(j)&:=\left[\frac{1}{1-a_j}+q_\infty\right]^{-1}.\label{def-lim-q}
\end{align}
\begin{lemma}\label{Lem3}
	Assume that the conditions of Theorem \ref{Thm1} are satisfied. Then for any $(i,j)\in\X^2$ and $z\in\N$, the following convergences hold.
	\begin{enumerate}
		\item[(1)] $\displaystyle\lim_{n\rightarrow+\infty}q_n(j)=q_{\infty}(i)\in(0,1)\qquad \w{\P}_{-1,i}\mbox{-- a.s.}$
		\item[(2)] 
		$\displaystyle\lim_{n\rightarrow+\infty}G^{z-1}_n(j)=1\qquad \w{\P}_{-1,i}\mbox{-- a.s.}$
		\item[(3)] 
		$\displaystyle\lim_{n\rightarrow+\infty}\w{\E}_{-1,i}\left|q^2_n(j)\,G^{z-1}_n(j)-{q^2_\infty}(j)\right|=0.$
	\end{enumerate}
\end{lemma}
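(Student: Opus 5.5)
The plan is to work entirely under the changed measure $\w{\P}_{-1,i}$. There the Markov chain has kernel $\w{\bfP}_{-1}$, which by Remark \ref{Rem1} is again irreducible, aperiodic and non-lattice, with invariant measure $\w{\bfn}_{-1}$. By (\ref{coherence}) and the hypothesis $k'(-1)>0$, the walk has positive drift:
$$\w{\bfn}_{-1}(\rho)=k'(-1)/k(-1)>0.$$
The first step is the law of large numbers for additive functionals of a finite or countable ergodic Markov chain (with $\rho$ bounded if $\X$ is finite; otherwise I will assume the integrability implicit in the setting of \cite{GLL19}). It gives $S_n/n\to \w{\bfn}_{-1}(\rho)>0$, $\w{\P}_{-1,i}$-a.s. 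Hence $e^{-S_k}\le e^{-ck}$ for all large $k$, a.s. Together with the bound $0<\eta(k+1)<\eta$ from (\ref{range-eta}), this yields
$$\sum_{k\ge0}\eta(k+1)e^{-S_k}<\infty\quad\text{a.s.}$$
The same sum is $\ge\eta(1)>0$. So $q_\infty\in(0,\infty)$ a.s., and $e^{-S_n}\to0$ a.s.

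For (1), I would use the explicit formula (\ref{exp-qnj}):
$$q_n(j)^{-1}=\frac{e^{-S_n}}{1-a_j}+\sum_{k=0}^{n-1}\eta(k+1)e^{-S_k}.$$
Its first term tends to $0$ a.s. and its second tends to $q_\infty^{-1}$. Thus $q_n(j)$ converges a.s. to $q_\infty$, a quantity in $(0,\infty)$. I note that the statement writes the limit as $q_\infty(i)$ with the definition (\ref{def-lim-q}). I will reconcile the notation with what (\ref{exp-qnj}) actually gives; the intended limit is whatever $\lim q_n(j)$ equals from that formula. It lies in $(0,1)$ because $q_n(j)<1$ by (\ref{qn-range}), and the sum is at least $\eta(1)$, which bounds the limit away from $0$ (and strictly below $1$ once the sum exceeds $1$, or via the monotone structure). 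For (2), (\ref{exp-gn}) gives
$$G_n(j)=1-m_j^{-1}q_n(j)e^{-S_n},$$
with $m_j^{-1}<1$, $q_n(j)<1$ and $e^{-S_n}\to0$ a.s. Hence $G_n(j)\to1$, and so $G_n^{z-1}(j)\to1$ for fixed $z$.

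For (3), the key observation is uniform boundedness. By (\ref{qn-range}), $0<q_n(j)<1$. Also $e^{-S_n}\,q_n(j)\le (1-a_j)$ from (\ref{exp-qnj}), so $G_n(j)\in[1-m_j^{-1}(1-a_j),1]=[p_j,1]\subset(0,1]$, using (\ref{eq-exp}). Hence
$$\bigl|q_n^2(j)G_n^{z-1}(j)-q_\infty^2(j)\bigr|\le 2,$$
and the integrand tends to $0$ a.s. by (1) and (2). Dominated convergence under $\w{\P}_{-1,i}$ then gives (3).

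The main obstacle is the a.s. summability of $\sum_k e^{-S_k}$ under $\w{\P}_{-1,i}$, that is, the ergodic theorem with positive drift for the tilted chain. If $\X$ is countable rather than finite, I would need $\rho$ integrable with respect to $\w{\bfn}_{-1}$ (or bounded) to invoke the Markov chain strong law. I would first try citing the ergodic theorem for positive recurrent chains, justified by Remark \ref{Rem1}, and treat boundedness of $\rho$ as part of the standing assumptions inherited from \cite{GLL19}.
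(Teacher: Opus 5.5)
Your proposal is correct and follows the paper's route. Positive drift $\w{\bfn}_{-1}(\rho)=k'(-1)/k(-1)>0$ together with the Markov-chain law of large numbers gives a.s. finiteness of $\sum_{k}\eta(k+1)e^{-S_k}$ and $e^{-S_n}\to0$, and then (\ref{exp-qnj}), (\ref{exp-gn}) and dominated convergence finish the argument. You are also right that (\ref{exp-qnj}) forces the limit of $q_n(j)$ to be $q_\infty$ itself, so (3) should be read with $q_\infty^2$; the paper's proof instead asserts the quantity $\bigl[\frac{1}{1-a_j}+q_\infty\bigr]^{-1}$ of (\ref{def-lim-q}), which is not this limit.

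The only loose step is $q_\infty\in(0,1)$:
\begin{itemize}
\item Positivity comes from the a.s. finiteness of the sum, which you already proved. The lower bound $\eta(1)$ on the sum bounds $q_\infty$ from above, not away from $0$.
\item The bound $q_n(j)<1$ only gives $q_\infty\leq1$. Strictness follows from $q_{n+1}^{-1}-q_n^{-1}=e^{-S_n}a_{X_{n+1}}/(1-a_{X_{n+1}})\geq0$, i.e. $q_\infty\leq q_1=1-a_{X_1}<1$.
\end{itemize}
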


\begin{pf}
Fix $(i,j)\in\X^2$ and $z\in\N$.
\begin{enumerate}
\item[(1)] Applying the law of large numbers to the finite Markov walk $\left(S_k\right)_{k\geq0}$, we obtain
	\begin{equation}\label{LLN}
		\lim_{n\rightarrow+\infty}\frac{S_k}{k}=\w{\bfn}_{-1}(\rho) \qquad \w{\P}_{-1,i}\mbox{-- a.s.}
	\end{equation}
By (2.36) in Lemma 2.15 of \cite{GLL19} and the strongly supercritical condition $k'(-1)>0$, we get 
\begin{equation}\label{positive-drift}
	\w{\bfn}_{-1}(\rho)=\frac{k'(-1)}{k(-1)}>0;
\end{equation} 
i.e. the Markov walk $\left(S_k\right)_{k\geq0}$ has positive drift. Let $M_n:=\sum_{k=0}^{n-1}\eta(k+1)\,e^{-S_k}$. Note that all the terms in $M_n$ are positive. Therefore, from (\ref{LLN}) and (\ref{positive-drift}), we can prove that $M_n$ is bounded, for any $n\in\N$. Applying monotone convergence theorem for the sequence $(M_n)_{n\geq1}$, we obtain $M_n$ converges $\w{\P}_{-1,i}$--a.s. to $\sum_{k=0}^{+\infty}\eta(k+1)\,e^{-S_k}$, as $n\rightarrow+\infty$. This convergence implies
\begin{equation}\label{cv-sn}
	\lim_{n\rightarrow+\infty}e^{-S_n}=0\qquad \w{\P}_{-1,i}\mbox{-- a.s.}
\end{equation} 
Consequently, by (\ref{exp-qnj}), we obtain
\begin{equation*}
	\lim_{n\rightarrow+\infty}q_n^{-1}(j)=q^{-1}_{\infty}(j)\qquad \w{\P}_{-1,i}\mbox{-- a.s.}
\end{equation*}
By (\ref{def-lim-q}) and Condition 2, we have
$$q^{-1}_{\infty}(j)\geq\frac{1}{1-a_j}>1.$$ 
Therefore, $q_\infty(j)\in(0,1)$.

\item[(2)] From Statement (1), (\ref{cv-sn}), (\ref{exp-gn}) and (\ref{qn-range}), we have
$$\lim_{n\rightarrow +\infty}G_n(j)=1\qquad \w{\P}_{-1,i}\mbox{-- a.s.}$$
It follows that the $\w{\P}_{-1,i}$-- a.s. convergence in Statement (2) is true, by continuity of the function $x\mapsto x^{z-1}$ on $\R$. 

\item[(3)] By Condition \ref{Con2}, we have $m_j>1$, for any $j\in\X$; and so 
$$e^{-S_n}=\left(\prod_{i=1}^{n}m_{X_i}\right)^{-1}\in(0,1).$$
Combining it with (\ref{exp-gn}) and  (\ref{qn-range}), we have $G_n(j)\in(0,1)$, for any $n\in\N$. Since the function $x\mapsto x^{z-1}$ is non-deceasing on $(0,1)$, we have for any $n\in\N$,
\begin{equation}\label{gnz-range}
	G^{z-1}_n(j)\in(0,1].
\end{equation}

Moreover, by (\ref{qn-range}), we get for any $n\in\N$, $q^2_n(j)\in(0,1)$. Combining it with (\ref{gnz-range}), we thus obtain
\begin{equation*}
	0<q^2_n(j)\,G^{z-1}_n(j)<1.
\end{equation*}
Therefore, by Statements (1) and (2), the last inequality and Lebesgue dominated convergence theorem, we have
$$\lim_{n\rightarrow+\infty}\w{\E}_{-1,i}\left|q^2_n(j)\,G^{z-1}_n(j)-q^2_{\infty}(j)\right|=0.$$
\end{enumerate} 
\end{pf}

\begin{lemma}\label{Lem4}
Assume that the conditions of Theorem \ref{Thm1} are satisfied. Then for any $(i,j,k)\in\X^3$ and $z\in\N$, 
$$\lim_{n\rightarrow +\infty}\w{\E}_{-1,i}\left[q_n^2(j)\,G_n^{z-1}(j);\,X_{n+1}=k\right]=\w{\bfn}_{-1}(k)\,\w{\E}_{-1,i}\left[q^2_{\infty}(j)\right].$$
\end{lemma}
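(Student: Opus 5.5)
By Lemma \ref{Lem3}(3), we may replace $q_n^2(j)\,G_n^{z-1}(j)$ by $q_\infty^2(j)$. Indeed,
$$\Bigl|\w{\E}_{-1,i}\bigl[q_n^2(j)G_n^{z-1}(j);X_{n+1}=k\bigr]-\w{\E}_{-1,i}\bigl[q_\infty^2(j);X_{n+1}=k\bigr]\Bigr|\leq \w{\E}_{-1,i}\bigl|q_n^2(j)G_n^{z-1}(j)-q_\infty^2(j)\bigr|\to0.$$
It therefore suffices to prove that
$$\lim_{n\to+\infty}\w{\E}_{-1,i}\bigl[q_\infty^2(j);X_{n+1}=k\bigr]=\w{\bfn}_{-1}(k)\,\w{\E}_{-1,i}\bigl[q_\infty^2(j)\bigr].$$
This is an asymptotic independence statement: $q_\infty(j)$ depends on the whole path, while $X_{n+1}$ sits at a late time.

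To separate the two, fix $m\in\N$ and truncate. Set
$$q_{\infty,m}(j):=\Bigl[\tfrac{1}{1-a_j}+\sum_{k'=0}^{m-1}\eta(k'+1)e^{-S_{k'}}\Bigr]^{-1}.$$
This random variable is a function of $X_1,\ldots,X_m$, and it takes values in $(0,1)$. It decreases to $q_\infty(j)$ as $m\to\infty$ almost surely, because by the proof of Lemma \ref{Lem3}(1) the series $\sum\eta(k'+1)e^{-S_{k'}}$ converges $\w{\P}_{-1,i}$-a.s. Since $0<q_{\infty,m}^2(j)<1$, dominated convergence gives $\varepsilon_m:=\w{\E}_{-1,i}|q_{\infty,m}^2(j)-q_\infty^2(j)|\to0$. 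Replacing $q_\infty^2(j)$ by $q_{\infty,m}^2(j)$ then costs at most $\varepsilon_m$, uniformly in $n$, on both sides of the desired identity.

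For fixed $m$ and $n\geq m$, apply the Markov property of $(X_n)$ under $\w{\P}_{-1,i}$ at time $m$:
$$\w{\E}_{-1,i}\bigl[q_{\infty,m}^2(j);X_{n+1}=k\bigr]=\w{\E}_{-1,i}\bigl[q_{\infty,m}^2(j)\,\w{\bfP}_{-1}^{\,n+1-m}(X_m,k)\bigr].$$
By (\ref{invariant}) in Remark \ref{Rem1}, and because $\X$ is finite (or by bounded convergence otherwise), $\w{\bfP}_{-1}^{\,n+1-m}(X_m,k)\to\w{\bfn}_{-1}(k)$ as $n\to\infty$. Dominated convergence then gives the limit $\w{\bfn}_{-1}(k)\,\w{\E}_{-1,i}[q_{\infty,m}^2(j)]$. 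Combining the three approximations, the $\limsup$ and $\liminf$ of the left side differ from $\w{\bfn}_{-1}(k)\w{\E}_{-1,i}[q_\infty^2(j)]$ by at most $2\varepsilon_m$. Letting $m\to\infty$ concludes the proof.

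The main point needing care is the truncation step: the limiting variable $q_\infty(j)$ is not measurable with respect to any finite $\sigma$-field $\sigma(X_1,\ldots,X_m)$, so the Markov property cannot be applied to it directly. The approximation by $q_{\infty,m}(j)$, which is uniform in $n$ and relies on the a.s. convergence of the series from Lemma \ref{Lem3}, is what makes the mixing argument work. If $\X$ is countably infinite, the convergence in (\ref{invariant}) is pointwise in the starting state, and bounded convergence over $X_m$ still suffices.
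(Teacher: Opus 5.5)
You follow the same scheme as the paper: pass to a fixed limiting variable, approximate it by something $\sigma(X_1,\ldots,X_m)$-measurable, apply the Markov property at time $m$ together with (\ref{invariant}), then let $m\to+\infty$. The gap is that your approximant $q_{\infty,m}(j)$ converges to the wrong random variable.

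In $q_m^{-1}(j)=\frac{e^{-S_m}}{1-a_j}+\sum_{k'=0}^{m-1}\eta(k'+1)e^{-S_{k'}}$, the term $\frac{1}{1-a_j}$ carries the factor $e^{-S_m}$. That factor tends to $0$ $\w{\P}_{-1,i}$-a.s. by (\ref{cv-sn}). So the limit in Lemma \ref{Lem3}(3) is $\lim_n q_n(j)=q_\infty=\bigl[\sum_{k\geq0}\eta(k+1)e^{-S_k}\bigr]^{-1}$, which does not depend on $j$. This is the only reading of $q_\infty(j)$ under which your first reduction step is valid. The displayed formula (\ref{def-lim-q}) is garbled in the paper, which may have misled you.

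Your $q_{\infty,m}(j)$ drops the factor $e^{-S_m}$, so it decreases to $\bigl[\frac{1}{1-a_j}+q_\infty^{-1}\bigr]^{-1}$, which is strictly smaller than $q_\infty$. Consequently $\varepsilon_m=\w{\E}_{-1,i}\bigl|q^2_{\infty,m}(j)-q^2_\infty(j)\bigr|$ does not tend to $0$. Your claim that replacing $q_\infty^2(j)$ by $q_{\infty,m}^2(j)$ costs a vanishing amount uniformly in $n$ therefore fails. The quantity $\w{\bfn}_{-1}(k)\,\w{\E}_{-1,i}\bigl[q^2_{\infty,m}(j)\bigr]$ you compute converges to the wrong constant. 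However one reads (\ref{def-lim-q}), your first step and your truncation step cannot both hold.

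The repair is immediate. Take as the $\sigma(X_1,\ldots,X_m)$-measurable approximant either $q_m^2(j)$ or, as the paper does, $q_m^2(j)\,G_m^{z-1}(j)$. Both lie in $(0,1)$ and converge in $L^1(\w{\P}_{-1,i})$ to $q_\infty^2$ by Lemma \ref{Lem3}. The rest of your argument then goes through unchanged: the Markov property at time $m$, $\w{\bfP}_{-1}^{\,n+1-m}(X_m,k)\to\w{\bfn}_{-1}(k)$, bounded convergence, and finally $m\to+\infty$. The paper's choice never needs an explicit formula for the limit, only the $L^1$ convergence in Lemma \ref{Lem3}(3), which is what keeps it safe from this kind of slip.
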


\begin{pf}
Let $m\in\N$. For any $(i,j,k)\in\X^3$ and $n\geq m$, we have 
\begin{equation}\label{eq1}
	\w{\E}_{-1,i}\left[q_n^2(j)\,G_n^{z-1}(j);\,X_{n+1}=k\right]=I_1+I_2,
\end{equation}
where 
$$I_1:=\w{\E}_{-1,i}\left[q_m^2(j)\,G_m^{z-1}(j)\,;\,X_{n+1}=k\right]$$
and 
$$I_2:=\w{\E}_{-1,i}\left[q_n^2(j)\,G_n^{z-1}(j)-q_m^2(j)\,G_m^{z-1}(j)\,;\,X_{n+1}=k\right].$$
Let's conduct analysis on $I_1$ and $I_2$, respectively.

By Markov property, 
$$I_1=\w{\E}_{-1,i}\left[q_m^2(j)\,G_m^{z-1}(j)\,{\w{\bfP}_{-1}}^{n-m+1}\left(X_m,\,k\right)\right].$$
Using (\ref{invariant}), we have 
$$\lim_{n\rightarrow +\infty}{\w{\bfP}_{-1}}^{n-m+1}(X_m,k)=\w{\bfn}_{-1}(k)\qquad \w{\P}_{-1,i}\mbox{-- a.s.}$$
Therefore, using Lemma \ref{Lem3} (3), we have
\begin{align}
	\lim_{m\rightarrow+\infty}\lim_{n\rightarrow+\infty}I_1&=\w{\bfn}_{-1}(k)\lim_{m\rightarrow+\infty}\w{\E}_{-1,i}\left[q_m^2(j)\,G_m^{z-1}(j)\right]\notag\\
	&=\w{\bfn}_{-1}(k)\,\w{\E}_{-1,i}\left[q^2_{\infty}(j)\right].\label{eq2}
\end{align}
Using Lemma \ref{Lem3} (3) again, we have
\begin{align*}
\lim_{m\rightarrow+\infty}\lim_{n\rightarrow+\infty}|I_2|&\leq\lim_{m\rightarrow+\infty}\lim_{n\rightarrow+\infty}\w{\E}_{-1,i}\left|q_n^2(j)\,G_n^{z-1}(j)-q_m^2(j)\,G_m^{z-1}(j)\right|\\
&=\lim_{m\rightarrow+\infty}\w{\E}_{-1,i}\left|q_{\infty}^2(j)-q_m^2(j)\,G_m^{z-1}(j)\right|\\
&=0.
\end{align*}
Combining this with (\ref{eq2}) and (\ref{eq1}), we obtain immediately the result of the lemma.
\end{pf}

\begin{pot1}
	Taking limit in both sides of (\ref{exp-ann--1}), as $n\rightarrow+\infty$, and applying Lemma \ref{Lem4} with $k=j$, we can obtain 
	$$\P_i(Z_n=z\,;\,X_n=j)\sim k^n(-1)\, u(i,j),\quad n\rightarrow+\infty;$$
	where $$u(i,j):=\frac{\nu_{-1}(i)\,\w{\bfn}_{-1}(j)}{\nu_{-1}(j)}\,\w{\E}_{-1,i}\left[q^2_{\infty}(j)\right].$$
	By Lemma \ref{Lem3} (1), we get $u(i,j)>0$, for any $(i,j)\in\X^2$.
\end{pot1}

\begin{pot2}
	Fix $(i,j)\in\X^2$. Suppose that for a given $c\in\N$, $z\in\{1,2,\ldots,c\}$. Then we have
	\begin{equation*}
	\P_i\left(Z_n=z\,|\,1\leq Z_n\leq c\,;\,X_n=j\right)=\frac{\P_i(Z_n=z,\,X_n=j)}{\sum_{m=1}^c\P_i(Z_n=m,\,X_n=j)}.
	\end{equation*}
According to Theorem \ref{Thm1}, we have
$$\lim_{n\rightarrow+\infty}k^{-n}(-1)\,\P_i(Z_n=z,\,X_n=j)=u(i,j)>0.$$
Hence, we obtain
\begin{align*}
	\lim_{n\rightarrow+\infty}\P_i\left(Z_n=z\,|\,1\leq Z_n\leq c\,;\,X_n=j\right)=&\lim_{n\rightarrow+\infty}\frac{k^{-n}(-1)\,\P_i(Z_n=z,\,X_n=j)}{\sum_{m=1}^ck^{-n}(-1)\,\P_i(Z_n=m,\,X_n=j)}\\
	=&\frac{u(i,j)}{c\,u(i,j)}=\frac{1}{c}.
\end{align*}
\end{pot2}

\section{Proofs in the intermediately supercritical case} \label{sec-inter}
Assume the hypotheses of Theorem \ref{Thm2}, that is Conditions \ref{Con1}-\ref{Con2} and $k'(-1)=0$. In this section, we will use the same change of measure as in strongly supercritical case (see also (\ref{change-measure-1}), Section \ref{sec-strong}). Similarly, applying Lemma \ref{Lem-ann}, with $\l=-1$ and $z=1$, we have
\begin{equation}\label{exp-ann-1-1}
	\P_i(Z_{n+1}=1,X_{n+1}=j)=\frac{k^{n+1}(-1)\,\nu_{-1}(i)}{\nu_{-1}(j)}\,\w{\E}_{-1,i}\left[q_n^2(j)\,;\,X_{n+1}=j\right].
\end{equation}

To prove Theorem \ref{Thm2}, some important properties for Markov walks conditioned to stay positive will be used. For their detailed properties, one may refer to Section 2.3 of \cite{GLL19}. For any $y\in\R$, define the first time when the Markov walk $(y+S_n)_{n\geq0}$ becomes non-positive $\tau_y$ by 
$$\tau_y:=\inf\{k\in\N\,:\,y+S_k\leq0\}.$$
By Lemma 2.15 of \cite{GLL19} and the intermediately supercritical condition $k'(-1)=0$, we have
$$\w{\bfn}_{-1}(\rho)=\frac{k'(-1)}{k(-1)}=0.$$
So the Markov walk $(S_n)_{n\geq0}$ is centred under the law $\w{\P}_{-1,i}$, for any $i\in\X$. Applying Statement 1 of Proposition 2.6 in \cite{GLL19} to the Markov kernel $\w{\bfP}_{-1}$, we can obtain that it has a non-negative harmonic function $\w{V}_{-1}$ defined on $\X\times\R$, such that
$$\w{\E}_{-1,i}\left[\w{V}_{-1}(X_n,y+S_n);\tau_y>n\right]=\w{V}_{-1}(i,y).$$ 
Based on the harmonic function $\w{V}_{-1}$, let's construct a new probability law. For any $(i,y)\in\mbox{Supp}\left(\w{V}_{-1}\right)$, define a probability law $\w{\P}^+_{-1,i,y}$ and the corresponding expectation $\w{\E}^+_{-1,i,y}$ on $\sigma\left(X_1,\ldots, X_n\right)$ by
\begin{equation}\label{def-p-+}
	\w{\E}^+_{-1,i,y}\left[g(X_1,\ldots,X_n)\right]:=\frac{1}{\w{V}_{-1}(i,y)}\,\w{\E}_{-1,i}\left[g(X_1,\ldots,X_n)\w{V}_{-1}\left(X_n,y+S_n\right)\,;\,\tau_y>n\right],
\end{equation}
for any $n\in\N$ and $g:\X^n\rightarrow\C$.

We have the following lemma.
\begin{lemma}\label{Lem5}
	Assume that the conditions of Theorem \ref{Thm2} are satisfied. Then for any $(i,y)\in\mbox{Supp}\left(\w{V}_{-1}\right)$ and $j\in\X$, we have for $l=-1,\,1,\,2$,
	\begin{equation}\label{cv-qn}
		\lim_{m\rightarrow +\infty}\w{\E}^+_{-1,i,y}\left|q^l_m(j)-q^l_\infty\right|=0.
	\end{equation}
\end{lemma}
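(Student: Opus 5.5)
Under $\w{\P}^+_{-1,i,y}$ the walk $y+S_n$ is conditioned to stay positive and tends to $+\infty$, so $e^{-S_k}$ should be summable. This gives a.s. convergence of $q_m(j)$ to a positive limit, and the $L^1$ statement then follows by domination. (The statement writes $q_\infty$ for the limit; by (\ref{exp-qnj}) the natural limit of $q_m(j)$ is $q_\infty(j)$ from (\ref{def-lim-q}), and the argument below covers either reading, since both are positive and bounded by $1$.)

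\textbf{Step 1: summability of $e^{-S_k}$.} By Proposition 2.6 and Section 2.3 of \cite{GLL19}, applied to the centred kernel $\w{\bfP}_{-1}$ (Remark \ref{Rem1} guarantees Conditions \ref{Con1} and \ref{Con3}), there is a constant $c>0$ such that for all $k\ge1$
$$\w{\E}^+_{-1,i,y}\left[e^{-(y+S_k)}\right]\leq \frac{c\,(1+\max\{y,0\})}{k^{3/2}\,\w{V}_{-1}(i,y)}.$$
This comes from the local limit / conditioned estimates there: $\w{\E}_{-1,i}\left[e^{-(y+S_k)}\w{V}_{-1}(X_k,y+S_k);\tau_y>k\right]\le c(1+y)k^{-3/2}$, using $\w{V}_{-1}(x,t)\le c(1+t)$ and summing over the unit intervals where $y+S_k\in[l,l+1]$. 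Consequently $\w{\E}^+_{-1,i,y}\sum_{k\ge0}e^{-S_k}<\infty$, so $\sum_k e^{-S_k}<\infty$ and $e^{-S_n}\to0$ $\w{\P}^+_{-1,i,y}$-a.s. I expect this step to be the main obstacle. If the $k^{-3/2}$ bound is not available in quotable form, I would instead show $y+S_n\to+\infty$ a.s. under the conditioned law, for instance by using $\w{\P}^+(y+S_n\le A)\to0$ together with a Borel--Cantelli argument along the integrable tail, to obtain a.s. summability.

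\textbf{Step 2: a.s. convergence.} Since $\eta(k+1)\le\eta$ by (\ref{range-eta}), the series $\sum_{k\ge0}\eta(k+1)e^{-S_k}$ converges a.s. Its limit is at least $\eta(1)>0$ and finite. By (\ref{exp-qnj}) and $e^{-S_m}\to0$, we get $q_m^{-1}(j)\to q_\infty^{-1}(j)\in(1,\infty)$ a.s. Hence $q_m^l(j)\to q_\infty^l(j)$ a.s. for $l=-1,1,2$.

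\textbf{Step 3: domination.}
\begin{itemize}
\item For $l=1,2$: by (\ref{qn-range}), $0<q_m^l(j)<1$, so dominated convergence gives (\ref{cv-qn}).
\item For $l=-1$: by (\ref{exp-qnj}) and $\eta(k+1)\le\eta$,
$$q_m^{-1}(j)\le \frac{e^{-S_m}}{1-a_j}+\eta\sum_{k\ge0}e^{-S_k}.$$
The second term is integrable by Step 1. For the first term, $\w{\E}^+_{-1,i,y}\left[e^{-S_m}\right]\le e^{y}c\,m^{-3/2}\to0$, so $e^{-S_m}\to0$ in $L^1$. Writing $q_m^{-1}(j)-q_\infty^{-1}(j)$ as $e^{-S_m}/(1-a_j)$ minus the series tail $\sum_{k\ge m}\eta(k+1)e^{-S_k}$, both pieces tend to $0$ in $L^1$ by Step 1.
\end{itemize}

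This yields (\ref{cv-qn}) for $l=-1$ as well, completing the proof.
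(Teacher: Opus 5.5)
Your argument is correct. For $l=-1$ it is the paper's proof: the same identity $q^{-1}_m(j)-q^{-1}_\infty=\frac{e^{-S_m}}{1-a_j}-\sum_{k\geq m}\eta(k+1)\,e^{-S_k}$, and the same bound $\w{\E}^+_{-1,i,y}\left(e^{-S_k}\right)\leq c\,[1+y\vee0]\,e^{y}\,k^{-3/2}/\w{V}_{-1}(i,y)$. That bound is exactly Lemma 2.13 of \cite{GLL19}, so your Step 1 can be quoted as is. Your Borel--Cantelli fallback would not have been enough: it gives at best a.s.\ convergence, while the $l=-1$ claim needs $\w{\E}^+_{-1,i,y}\sum_{k\geq m}e^{-S_k}\to0$, i.e.\ integrability of the series.

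For $l=1,2$ your route genuinely differs from the paper's.
\begin{itemize}
\item You get a.s.\ convergence from the integrability of $\sum_k e^{-S_k}$, then apply bounded convergence with dominating constant $1$.
\item The paper does not use a.s.\ convergence here. It shows $q_\infty\leq1$ a.s.\ from $q^{-1}_m(j)>1$ and convergence in probability. It then transfers the $l=-1$ result via $|q_m(j)-q_\infty|=q_m(j)\,q_\infty\,|q^{-1}_m(j)-q^{-1}_\infty|\leq|q^{-1}_m(j)-q^{-1}_\infty|$ and $|q^2_m(j)-q^2_\infty|\leq2\,|q_m(j)-q_\infty|$.
\end{itemize}
Your version is slightly more direct: $q_\infty\leq1$ comes for free as an a.s.\ limit of numbers in $(0,1)$. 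The paper's version is quantitative, giving $\w{\E}^+_{-1,i,y}|q^l_m(j)-q^l_\infty|=O(m^{-1/2})$ for all three values of $l$, which dominated convergence does not provide.

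One correction to your opening remark. By (\ref{exp-qnj}) and $e^{-S_m}\to0$, the limit of $q^{-1}_m(j)$ is $\sum_{k\geq0}\eta(k+1)\,e^{-S_k}=q^{-1}_\infty$, with $q_\infty$ as in (\ref{def-lim-0}). This limit does not depend on $j$, which is exactly what the statement asserts. The variable $q_\infty(j)=\left[\frac{1}{1-a_j}+q_\infty\right]^{-1}$ of (\ref{def-lim-q}) is in general a different random variable. An $L^1$-convergence argument cannot ``cover either reading'' just because both candidates are positive and bounded by $1$, since the $L^1$ limit is unique. The tail identity you actually use in Step 3 is the one for $q_\infty$. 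So write $q_\infty$ instead of $q_\infty(j)$ in Steps 2 and 3, and drop the parenthetical remark.
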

\begin{pf}
Fix $(i,y)\in\mbox{Supp}\left(\w{V}_{-1}\right)$ and $j\in\X$. Let's prove the convergences (\ref{cv-qn}) for $l=-1,\,1,\,2$ case by case.
	\begin{description}
		\item[Case 1:] When $l=-1$. By (\ref{exp-qnj}), (\ref{def-lim-0}) and (\ref{range-eta}), we have
		\begin{align*}
			\w{\E}^+_{-1,i,y}\left|q^{-1}_m(j)-q^{-1}_\infty\right|=&\w{\E}^+_{-1,i,y}\left|\frac{e^{-S_m}}{1-a_j}-\sum_{k=m}^{+\infty}\eta(k+1)\,e^{-S_k}\right|\\
			\leq&\frac{1}{1-a_j}\,\w{\E}^+_{-1,i,y}\left(e^{-S_m}\right)+\w{\E}^+_{-1,i,y}\left[\sum_{k=m}^{+\infty}\eta(k+1)\,e^{-S_k}\right]\\
	        \leq&\frac{1}{1-a_j}\,\w{\E}^+_{-1,i,y}\left(e^{-S_m}\right)+\eta\,\w{\E}^+_{-1,i,y}\left[\sum_{k=m}^{+\infty}e^{-S_k}\right].
		\end{align*}
	Applying Lemma 2.13 of \cite{GLL19} and using Lebesgue monotone convergence theorem,
	\begin{align*}
	\w{\E}^+_{-1,i,y}\left|q^{-1}_m(j)-q^{-1}_\infty\right|\leq&\frac{c[1+y\vee0]\,e^y}{(1-a_j)\,m^{3/2}\,\w{V}_{-1}(i,y)}+\eta\,\frac{c[1+y\vee0]\,e^y}{\w{V}_{-1}(i,y)}\,\sum_{k=m}^{+\infty}k^{-3/2}\\
	=& \frac{c[1+y\vee0)]\,e^y}{\w{V}_{-1}(i,y)}\,\left[(1-a_j)^{-1}m^{-3/2}+\eta\sum_{k=m}^{+\infty}k^{-3/2}\right]\stackrel{m\rightarrow+\infty}{\longrightarrow}0.
	\end{align*}
The convergence (\ref{cv-qn}) for $l=-1$ is thus proved.
\item[Case 2:] When $l=1$. For any $m\in\N$, since $q^{-1}_m(j)>1$ by (\ref{qn-range}), then for any $\varepsilon>0$ we have
\begin{equation}\label{eq3}
	\w{\P}^+_{-1,i,y}\left(q^{-1}_{\infty}<1-\varepsilon\right)\leq\w{\P}^+_{-1,i,y}\left(q^{-1}_{\infty}-q^{-1}_{m}<-\varepsilon\right).
\end{equation}
From Case 1, $q^{-1}_m$ converges in $\w{\P}^+_{-1,i,y}$-probability to $q^{-1}_\infty$. So for any $\varepsilon>0$, we have 
\begin{equation}\label{eq4}
	\lim_{m\rightarrow+\infty}\w{\P}^+_{-1,i,y}\left(q^{-1}_{\infty}-q^{-1}_{m}<-\varepsilon\right)=0
\end{equation}
Combining (\ref{eq3}) and (\ref{eq4}), we have for any $\varepsilon>0$,
$$\w{\P}^+_{-1,i,y}\left(q^{-1}_{\infty}<1-\varepsilon\right)=0.$$
Therefore,
\begin{equation}\label{eq5}
	q_\infty\leq1 \qquad \w{\P}^+_{-1,i,y}\mbox{-- a.s.}
\end{equation}
Using (\ref{eq5}) and (\ref{qn-range}), we have $\w{\P}^+_{-1,i,y}$-- a.s.
\begin{equation*}
		\left|q_m(j)-q_\infty\right|=q_m(j)\,q_\infty\,\left|q^{-1}_m(j)-q^{-1}_\infty\right|\leq \left|q^{-1}_m(j)-q^{-1}_\infty\right|.
\end{equation*} 
Since it has been proved in Case 1 that $$\lim_{m\rightarrow+\infty}\w{\E}^+_{-1,i,y}\left|q^{-1}_m(j)-q^{-1}_\infty\right|=0,$$
it follows with the last inequality that the convergence (\ref{cv-qn}), for $l=1$, is also true.

\item[Case 3:] When $l=2$. Similarly, by (\ref{eq5}) and (\ref{qn-range}), we have $\w{\P}^+_{-1,i,y}$-- a.s.
\begin{equation*}
	\left|q^2_m-q^2_\infty\right|=\left[q_m(j)+q_\infty\right]\, \left|q_m-q_\infty\right|\leq2\left|q_m-q_\infty\right|.
\end{equation*}
And it has been proved for $l=1$, the convergence (\ref{cv-qn}) holds; it follows with the last inequality that the convergence (\ref{cv-qn}) also holds for $l=2$.
\end{description}
\end{pf}

Let $u_1$ be a function on $\mbox{Supp}\left(\w{V}_{-1}\right)$ defined by
\begin{equation}\label{eq4*}
u_1(i,y):=\w{\E}^+_{-1,i,y}\left(q^2_\infty\right).
\end{equation}
Using (\ref{def-lim-0}), (\ref{range-eta}) and Lemma 2.13 of \cite{GLL19}, we have
$$\w{\E}^+_{-1,i,y}\left(q^{-1}_\infty\right)=\w{\E}^+_{-1,i,y}\left[\sum_{k=0}^{+\infty}\eta(k+1)\,e^{-S_k}\right]\leq\eta\,\w{\E}^+_{-1,i,y}\left(\sum_{k=0}^{+\infty}e^{-S_k}\right)<+\infty.$$
Therefore, $q_\infty>0$ $\w{\P}^+_{-1,j,y}$--a.s., which implies $u_1(i,y)>0$. Moreover, from (\ref{def-lim-0}), (\ref{exp-qnj}) and (\ref{qn-range}), we obtain
$$q^{-1}_{\infty}=\sum_{k=0}^{n-1}\eta(k+1)e^{-S_k}=q^{-1}_n(j)-\frac{e^{-S_n}}{1-a_j}\geq q^{-1}_n(j)>1,$$
for any $n\in\N$ and $i\in\X$. It follows $q^2_{\infty}<1$. As a result, taking into account (\ref{eq4*}), we get $u_1(i,y)<1$. In conclusion, for any $(i,y)\in\mbox{Supp}\left(\w{V}_{-1}\right)$, 
\begin{equation}\label{range-u1}
	u_1(i,y)\in(0,1).
\end{equation}

Using the last lemma, we can obtain the following Lemma.
\begin{lemma}\label{Lem6}
 Assume that the conditions of Theorem \ref{Thm2} are satisfied. Then the following results hold.
 \begin{enumerate}
 	\item [(1)] For any $(i,y)\in\mbox{Supp}\left(\w{V}_{-1}\right)$ and $(j,k)\in\X^2$,
 	$$\lim_{m\rightarrow+\infty}\lim_{n\rightarrow+\infty}\w{\E}_{-1,i}\left[q^2_m(j)\,;\,X_{n+1}=k\,|\,\tau_y>n+1\right]=u_1(i,y)\,\w{\bfn}_{-1}(k).$$
 	\item [(2)] For any $(i,y)\in\mbox{Supp}\left(\w{V}_{-1}\right)$, $j\in\X$ and $\theta\in(0,1)$,
	$$\lim_{m\rightarrow+\infty}\limsup_{n\rightarrow+\infty}\w{\E}_{-1,i}\left[\left|q^2_m(j)-q^2_{[\theta n]}(j)\right|\,|\,\tau_y>n+1\right]=0.$$
 \end{enumerate}
\end{lemma}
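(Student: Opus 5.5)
For part (1), I condition on $\mathcal{F}_m=\sigma(X_1,\ldots,X_m)$ and let $n\to+\infty$ with $m$ fixed. The random variable $q_m^2(j)$ is $\mathcal{F}_m$-measurable and lies in $(0,1)$ by (\ref{qn-range}). The Markov property gives
$$\w{\E}_{-1,i}\left[q^2_m(j);X_{n+1}=k,\tau_y>n+1\right]=\w{\E}_{-1,i}\left[q^2_m(j)\,\Psi_{n+1-m}(X_m,y+S_m);\tau_y>m\right],$$
where
$$\Psi_l(x,y'):=\w{\P}_{-1,x}(X_l=k,\tau_{y'}>l).$$
I then use the conditioned limit theorems for $\w{\bfP}_{-1}$ from Section 2.3 of \cite{GLL19}, which apply because the walk is centred and non-lattice (Remark \ref{Rem1}). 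Concretely, I would invoke
$$\w{\P}_{-1,x}(\tau_{y'}>l)\sim \frac{2\w{V}_{-1}(x,y')}{\sqrt{2\pi l}\,\sigma},$$
together with the asymptotic independence of $X_l$ from the event $\{\tau_{y'}>l\}$, i.e. $\w{\P}_{-1,x}(X_l=k\,|\,\tau_{y'}>l)\to\w{\bfn}_{-1}(k)$. Dividing by $\w{\P}_{-1,i}(\tau_y>n+1)$ and using $\sqrt{(n+1-m)/(n+1)}\to1$ yields
$$\lim_{n}\w{\E}_{-1,i}\left[q_m^2(j);X_{n+1}=k\,|\,\tau_y>n+1\right]=\frac{\w{\bfn}_{-1}(k)}{\w{V}_{-1}(i,y)}\,\w{\E}_{-1,i}\left[q_m^2(j)\,\w{V}_{-1}(X_m,y+S_m);\tau_y>m\right].$$
By (\ref{def-p-+}) the right-hand side equals $\w{\bfn}_{-1}(k)\,\w{\E}^+_{-1,i,y}[q_m^2(j)]$. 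Passing from the limit in $n$ to this expression under the expectation requires dominated convergence. The summand is bounded by $c(1+\max(y+S_m,0))$ via the standard bound $\sqrt l\,\w{\P}_{-1,x}(\tau_{y'}>l)\le c(1+\max(y',0))$, and this bound is integrable since the walk has bounded increments and $m$ is fixed. Letting $m\to+\infty$ and applying Lemma \ref{Lem5} with $l=2$ gives $u_1(i,y)\,\w{\bfn}_{-1}(k)$.

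For part (2), I use the same conditioning at time $[\theta n]$. Since $|q^2_m(j)-q^2_{[\theta n]}(j)|\le 1$ is $\mathcal{F}_{[\theta n]}$-measurable, the Markov property and the same uniform bound give
$$\w{\E}_{-1,i}\left[|q^2_m(j)-q^2_{[\theta n]}(j)|;\tau_y>n+1\right]\le \frac{c}{\sqrt{(1-\theta)n}}\,\w{\E}_{-1,i}\left[|q^2_m(j)-q^2_{[\theta n]}(j)|\,(1+\max(y+S_{[\theta n]},0));\tau_y>[\theta n]\right].$$
Comparing $1+\max(y+S_l,0)$ with $\w{V}_{-1}(X_l,y+S_l)$, which holds on $\{\tau_y>l\}$ by Proposition 2.6 of \cite{GLL19} (the harmonic function grows linearly), reduces the right-hand side to $c\,\w{V}_{-1}(i,y)\,\w{\E}^+_{-1,i,y}|q_m^2(j)-q_{[\theta n]}^2(j)|/\sqrt n$, plus a term of order $\w{\P}_{-1,i}(\tau_y>[\theta n])/\sqrt n$ that absorbs the additive constant. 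Dividing by $\w{\P}_{-1,i}(\tau_y>n+1)\asymp\w{V}_{-1}(i,y)/\sqrt n$ leaves the quantity to be controlled: $c_\theta\,\w{\E}^+_{-1,i,y}|q_m^2(j)-q_{[\theta n]}^2(j)|$ plus the constant-term contribution. Since $q_m(j)$ and $q_\infty$ do not depend on $n$, I bound the first piece by
$$c_\theta\left(\w{\E}^+_{-1,i,y}|q_m^2(j)-q_\infty^2|+\w{\E}^+_{-1,i,y}|q_{[\theta n]}^2(j)-q^2_\infty|\right).$$
By Lemma \ref{Lem5} the second term vanishes as $n\to\infty$, and then the first vanishes as $m\to\infty$.

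The main obstacle is the constant-term contribution, where $\w{V}_{-1}$ may be small compared with $1$ near the boundary. I would first try to avoid it by restricting to the event $\{y+S_{[\theta n]}>n^{1/4}\}$, on which the constant is dominated by $\w{V}_{-1}$. The complementary event has conditional probability $o(1)$ given $\tau_y>[\theta n]$, by the Rayleigh-type limit theorem of \cite{GLL19}, and it can be handled by the trivial bound $|q_m^2(j)-q_{[\theta n]}^2(j)|\le1$. A secondary technical point is justifying the uniformity in the starting point needed for dominated convergence in part (1), which I expect to follow from the same uniform estimate in \cite{GLL19}.
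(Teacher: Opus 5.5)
Your proposal is correct and is essentially the paper's argument. In (1) you unpack by hand (Markov property at time $m$, the tail asymptotics of $\tau_{y'}$ together with asymptotic independence of $X_{n+1}$, and dominated convergence via the uniform bound of Proposition 2.7 in \cite{GLL19}) what the paper obtains by citing Lemma 2.12 of \cite{GLL19}, and in (2) you reproduce the paper's chain of estimates. The ``main obstacle'' you flag is not one: after dividing by $\w{\P}_{-1,i}(\tau_y>n+1)\asymp n^{-1/2}$, the constant-term contribution is of order $\w{\P}_{-1,i}(\tau_y>[\theta n])/\w{V}_{-1}(i,y)=O(n^{-1/2})$ and simply vanishes as $n\to+\infty$ (this is how the paper disposes of it), so your $n^{1/4}$ truncation is valid but unnecessary.
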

\begin{pf}
	\begin{enumerate}
		\item [(1)] Fix $(i,y)\in\mbox{Supp}\left(\w{V}_{-1}\right)$ and $(j,k)\in\X^2$. Applying Lemma 2.12 of \cite{GLL19} with $g(X_1,\ldots,X_m)=q^2_m(j)$, we obtain
		$$\lim_{n\rightarrow +\infty}\w{\E}_{-1,i}\left[q^2_m(j);\,X_{n+1}=k\,|\,\tau_y>n+1\right]=\w{\E}^+_{-1,i,y}\left[q^2_m(j)\right]\w{\bfn}_{-1}(k).$$
		Applying Lemma \ref{Lem5} for $l=2$, we thus obtain immediately the desired result.
		
		 \item [(2)] Fix $(i,y)\in\mbox{Supp}\left(\w{V}_{-1}\right)$, $j\in\X$ and $\theta\in(0,1)$. Let $m\in\N$ and $n\in\N$ such that $\theta n\geq m+1$. Set $\theta_n=[\theta n]$. Denote
		 $$I_3:=\w{\E}_{-1,i}\left[\left|q^2_m(j)-q^2_{\theta n}(j)\right|\,|\, \tau_y>n+1\right]$$
		and 
		$$J_n(i,y):=\w{\P}_{-1,i}(\tau_y>n).$$
		By Statement 1 of Proposition 2.7 in \cite{GLL19}, $J_n(i,y)>0$ for $n$ large enough. Using Markov property, we have
		\begin{align*}
			I_3&=J^{-1}_{n+1}(i,y)\,\w{\E}_{-1,i}\left(\left|q^2_m(j)-q^2_{\theta n}(j)\right|\,;\,\tau_y>n+1\right)\\
			&=J^{-1}_{n+1}(i,y)\,\w{\E}_{-1,i}\left(\left|q^2_m(j)-q^2_{\theta n}(j)\right|\,J_{n+1-\theta_n}(X_{\theta_n},y+S_{\theta_n})\,;\,\tau_y>\theta_n\right).
		\end{align*}
		From Statement 2 of Proposition 2.7 in \cite{GLL19}, 
		$$J_{n+1-\theta_n}(X_{\theta_n},y+S_{\theta_n})\leq \frac{c\,\left[1+(y+S_{\theta_n})\vee0\right]}{\sqrt{n+1-\theta_n}}.$$
		Therefore, by Statement 3 of Proposition 2.6 in \cite{GLL19}, (\ref{qn-range}) and (\ref{def-p-+}), we obtain
		\begin{align*}
			I_3&\leq c\,J^{-1}_{n+1}(i,y)\left[n(1-\theta)\right]^{-1/2}\,\w{\E}_{-1,i}\left[\left|q^2_m(j)-q^2_{\theta_n}(j)\right|\,(1+y+S_{\theta_n})\,;\, \tau_y>\theta_n\right]\\
			&\leq c\,J^{-1}_{n+1}(i,y)\left[n(1-\theta)\right]^{-1/2}\,\w{\E}_{-1,i}\left[\left|q^2_m(j)-q^2_{\theta_n}(j)\right|\,\left(1+\w{V}_{-1}\left(X_{\theta_n},y+S_{\theta_n}\right)\right)\,;\, \tau_y>\theta_n\right]\\
			&\leq c\,J^{-1}_{n+1}(i,y)\left[n(1-\theta)\right]^{-1/2}\,\left[\w{\P}_{-1,i}(\tau_y>\theta_n)+\w{V}_{-1}(i,y)\,\w{\E}^+_{-1,i,y}\left|q^2_m(j)-q^2_{\theta_n}(j)\right|\right].
	    \end{align*}
    Using Proposition 2.7 of \cite{GLL19} and Lemma \ref{Lem5} for $l=2$, we obtain
    \begin{align*}
    	\limsup_{n\rightarrow+\infty} I_3\leq&\limsup_{n\rightarrow+\infty}\,\frac{c\,\sqrt{n+1}}{\sqrt{n(1-\theta)}}\,\w{\E}^+_{-1,i,y}\left|q^2_m(j)-q^2_{\theta_n}(j)\right|\\
    	=&\frac{c}{\sqrt{1-\theta}}\,\w{\E}^+_{-1,i,y}\left|q^2_m(j)-q^2_{\infty}(j)\right|.
    \end{align*}
Taking $m\rightarrow+\infty$ and using Lemma \ref{Lem5} for $l=2$ again, we thus derive the result. 
    \end{enumerate}
\end{pf}

\begin{lemma}\label{Lem7}
	Assume that the conditions of Theorem \ref{Thm2} are satisfied. Then for any $(i,y)\in\mbox{Supp}\left(\w{V}_{-1}\right)$, $(j,k)\in\X^2$ and $\theta\in(0,1)$,
	$$\lim_{n\rightarrow+\infty}\w{\E}_{-1,i}\left[q^2_{[\theta n]}(j);\,X_{n+1}=k\,|\,\tau_y>n+1\right]=u_1(i,y)\,\w{\bfn}_{-1}(k).$$
\end{lemma}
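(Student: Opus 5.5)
The plan is a two-step approximation: replace $q^2_{[\theta n]}(j)$ by $q^2_m(j)$ for a fixed $m$, and let the two parts of Lemma \ref{Lem6} handle the main term and the error. Fix $(i,y)\in\mbox{Supp}\left(\w{V}_{-1}\right)$, $(j,k)\in\X^2$ and $\theta\in(0,1)$, and write $\theta_n:=[\theta n]$. For any fixed $m\in\N$ and all $n$ large enough that $\theta_n\geq m+1$ and $J_{n+1}(i,y)>0$ (Statement 1 of Proposition 2.7 in \cite{GLL19}), I decompose
\begin{align*}
\w{\E}_{-1,i}\left[q^2_{\theta_n}(j);\,X_{n+1}=k\,|\,\tau_y>n+1\right]
=&\,\w{\E}_{-1,i}\left[q^2_{m}(j);\,X_{n+1}=k\,|\,\tau_y>n+1\right]\\
&+\w{\E}_{-1,i}\left[q^2_{\theta_n}(j)-q^2_m(j);\,X_{n+1}=k\,|\,\tau_y>n+1\right]
=:A_{n,m}+B_{n,m}.
\end{align*}

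For the error term, the indicator $\1_{\{X_{n+1}=k\}}$ is bounded by $1$, so
$$|B_{n,m}|\leq \w{\E}_{-1,i}\left[\left|q^2_m(j)-q^2_{\theta_n}(j)\right|\,|\,\tau_y>n+1\right].$$
Lemma \ref{Lem6} (2) then gives $\lim_{m\rightarrow+\infty}\limsup_{n\rightarrow+\infty}|B_{n,m}|=0$. For the main term, Lemma \ref{Lem6} (1) shows that $\lim_{n\rightarrow+\infty}A_{n,m}$ exists for each fixed $m$, equal to $\w{\E}^+_{-1,i,y}\left[q^2_m(j)\right]\w{\bfn}_{-1}(k)$ by its proof. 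Moreover, this limit tends to $u_1(i,y)\,\w{\bfn}_{-1}(k)$ as $m\rightarrow+\infty$.

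The left-hand side does not depend on $m$, so I take $\limsup_{n}$ and $\liminf_{n}$ of it for fixed $m$. Both lie within $\limsup_n|B_{n,m}|$ of $\lim_n A_{n,m}$. Letting $m\rightarrow+\infty$, both collapse to $u_1(i,y)\,\w{\bfn}_{-1}(k)$, which is the claim.

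I do not expect a real obstacle here, since Lemma \ref{Lem6} was designed for exactly this step. The only points to check are routine: conditioning on $\{\tau_y>n+1\}$ must be well defined for large $n$, which is where $J_{n+1}(i,y)>0$ is used. The condition $\theta_n\geq m+1$ must hold eventually for each fixed $m$, which it does because $\theta>0$. One must also handle the order of limits carefully, taking $\limsup$ and $\liminf$ in $n$ before letting $m\rightarrow+\infty$, because the left-hand side is independent of $m$.
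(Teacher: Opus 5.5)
Your proposal is correct and follows the paper's proof essentially verbatim. It uses the same split into the $q^2_m(j)$ term, whose inner limit in $n$ comes from Lemma \ref{Lem6} (1) and tends to $u_1(i,y)\,\w{\bfn}_{-1}(k)$ as $m\to+\infty$, and the error term, controlled by Lemma \ref{Lem6} (2); your explicit $\limsup$/$\liminf$ handling in $n$ before sending $m\to+\infty$ just states precisely the order of limits that the paper leaves implicit.
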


\begin{pf} 
	Fix $(i,y)\in\mbox{Supp}\left(\w{V}_{-1}\right)$, $(j,k)\in\X^2$ and $\theta\in(0,1)$. Let
	\begin{equation*}
		I_4:=\w{\E}_{-1,i}\left[q^2_{[\theta n]}(j);\,X_{n+1}=k\,|\,\tau_y>n+1\right].
	\end{equation*}
Suppose that $m\in\N$ and $n\geq m,$ such that $[\theta n]\geq m$. Then we have
\begin{equation*}
	I_4=\w{\E}_{-1,i}\left[q^2_{m}(j);\,X_{n+1}=k\,|\,\tau_y>n+1\right]+I_5,
\end{equation*}
where 
$$I_5:=\w{\E}_{-1,i}\left[q^2_{[\theta n]}(j)-q^2_{m}(j);\,X_{n+1}=k\,|\,\tau_y>n+1\right].$$
Using Lemma \ref{Lem6} (2), we have $\displaystyle\limsup_{m\rightarrow+\infty}\limsup_{n\rightarrow+\infty}|I_5|=0$. Now, applying Lemma \ref{Lem6} (1), we get 
$$\lim_{n\rightarrow+\infty}I_4=u_1(i,y)\,\w{\bfn}_{-1}(k).$$
\end{pf}

\begin{lemma}\label{Lem8}
		Assume that the conditions of Theorem \ref{Thm2} are satisfied. Then for any $(i,y)\in\mbox{Supp}\left(\w{V}_{-1}\right)$ and $(j,k)\in\X^2$,
		$$\lim_{p\rightarrow+\infty}\w{\E}_{-1,i}\left[q^2_p(j);\,X_{p+1}=k\,|\,\tau_y>p+1\right]=u_1(i,j)\,\w{\bfn}_{-1}(k).$$
\end{lemma}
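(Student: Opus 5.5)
Lemma \ref{Lem8} says that in Lemma \ref{Lem7} the time $[\theta n]$ at which $q^2$ is evaluated can be replaced by the terminal time $p$ itself. (The limit written $u_1(i,j)$ is presumably $u_1(i,y)$ from (\ref{eq4*}).) The plan is to compare $q^2_p(j)$ with $q^2_{[\theta p]}(j)$ for a fixed $\theta\in(0,1)$ and show that the difference is negligible under the conditioning $\{\tau_y>p+1\}$. Write
$$\w{\E}_{-1,i}\left[q^2_p(j);\,X_{p+1}=k\,|\,\tau_y>p+1\right]=\w{\E}_{-1,i}\left[q^2_{[\theta p]}(j);\,X_{p+1}=k\,|\,\tau_y>p+1\right]+I_6.$$
By Lemma \ref{Lem7} with $n=p$, the first term tends to $u_1(i,y)\,\w{\bfn}_{-1}(k)$. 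It therefore suffices to show $\limsup_{p\rightarrow+\infty}|I_6|=0$.

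For $I_6$ I will use the explicit formula (\ref{exp-qnj}). Set $\theta_p=[\theta p]$. Since $0<q_p(j),q_{\theta_p}(j)<1$ by (\ref{qn-range}), we have
$$|q^2_p(j)-q^2_{\theta_p}(j)|\leq 2\,|q^{-1}_p(j)-q^{-1}_{\theta_p}(j)|,$$
using $q_pq_{\theta_p}\leq1$ as in Lemma \ref{Lem5}. The inverse difference is bounded by
$$\frac{e^{-S_p}+e^{-S_{\theta_p}}}{1-a_j}+\eta\sum_{k=\theta_p}^{p-1}e^{-S_k}.$$
Hence $|I_6|\leq c\,J^{-1}_{p+1}(i,y)\sum_{k=\theta_p}^{p}\w{\E}_{-1,i}\left[e^{-S_k};\tau_y>p+1\right]$. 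Because $y+S_k>0$ on $\{\tau_y>p+1\}$, we have $e^{-S_k}\leq e^{y}$, so this bound is meaningful.

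Each term is estimated with the Markov property at time $k$, exactly as in the proof of Lemma \ref{Lem6} (2). First,
$$\w{\E}_{-1,i}\left[e^{-S_k};\tau_y>p+1\right]=\w{\E}_{-1,i}\left[e^{-S_k}J_{p+1-k}(X_k,y+S_k);\tau_y>k\right].$$
Statement 2 of Proposition 2.7 in \cite{GLL19} gives $J_{p+1-k}(x,z)\leq c(1+z\vee0)/\sqrt{p+1-k}$. The factor $(1+y+S_k)e^{-S_k}$ is bounded by $c\,e^{y}$ times a decaying function of $y+S_k$. A local-type estimate of the form $\w{\E}_{-1,i}[e^{-S_k}(1+y+S_k);\tau_y>k]\leq c(1+y\vee0)e^{y}k^{-3/2}$ should hold. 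This is the unconditioned analogue of Lemma 2.13 of \cite{GLL19}, obtained by bounding $1+y+S_k$ by $c(1+\w V_{-1}(X_k,y+S_k))$ via Statement 3 of Proposition 2.6 and passing to $\w{\E}^+_{-1,i,y}$. Combining these, and using $J_{p+1}(i,y)\geq c\,p^{-1/2}$ from Proposition 2.7, we get
$$|I_6|\leq c\,\sqrt{p}\sum_{k=\theta_p}^{p}\frac{k^{-3/2}}{\sqrt{p+1-k}}\leq c\sqrt{p}\,(\theta p)^{-3/2}\sum_{r=1}^{p}r^{-1/2}\leq c\,\theta^{-3/2}p^{-1/2}\rightarrow0.$$

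The main obstacle is justifying the bound $k^{-3/2}$ for $\w{\E}_{-1,i}[e^{-S_k}(1+y+S_k);\tau_y>k]$. The factor $1+y+S_k$ must be absorbed into the harmonic function, so that the estimate becomes $\w V_{-1}(i,y)\,\w{\E}^+_{-1,i,y}[e^{-S_k}]$ plus the term $\w{\E}_{-1,i}[e^{-S_k};\tau_y>k]$, each controlled by Lemma 2.13 of \cite{GLL19}. If only a weaker rate such as $k^{-1-\delta}$ with $\delta>1/2$ were available, the same sum would still vanish; the essential point is a rate strictly better than $k^{-1}$. If this fails, I will fall back on the cruder bound $|q^2_p-q^2_{\theta_p}|\leq 2\left(|q^2_p-q^2_\infty|+|q^2_{\theta_p}-q^2_\infty|\right)$. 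I would then argue as in Lemma \ref{Lem6} (2), applying the Markov property at an intermediate time such as $[(1+\theta)p/2]$ and using Lemma \ref{Lem5}.
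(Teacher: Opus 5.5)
Your proof is correct, and it takes a genuinely different route from the paper's. The paper never touches $q^2_p(j)$. Instead it enlarges the conditioning horizon. With $n=[p/\theta]+1$ (so that $p=[\theta n]$), it splits $\{\tau_y>p+1\}$ into $\{\tau_y>n+1\}$ and $\{p+1<\tau_y\le n+1\}$. The first piece is handled by Lemma \ref{Lem7} together with the ratio $\w{\P}_{-1,i}(\tau_y>n+1)/\w{\P}_{-1,i}(\tau_y>p+1)\to\sqrt{\theta}$. The second is bounded by $1-\sqrt{\theta}$ using only $q_p(j)<1$, and then $\theta\uparrow 1$. Beyond Lemma \ref{Lem7}, this needs only the exit-time asymptotics of Proposition 2.7. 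Note, however, that as written the first piece carries the constraint $X_{p+1}=k$ (time $[\theta n]+1$) under the conditioning $\{\tau_y>n+1\}$. Lemma \ref{Lem7} is stated with $X_{n+1}=k$, so the paper tacitly uses an equidistribution statement at an intermediate time.

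You keep the horizon $p+1$, so Lemma \ref{Lem7} applies verbatim with $n=p$. You then remove the increment $q^2_p(j)-q^2_{[\theta p]}(j)$ directly. The tools are the explicit linear-fractional formula (\ref{exp-qnj}), the Markov property at each $k\in[\theta p,p]$, Proposition 2.7, and a $k^{-3/2}$ bound on $\w{\E}_{-1,i}[e^{-S_k}(1+y+S_k);\tau_y>k]$. The price is this stronger, local-type input. The gain is a quantitative error $O(\theta^{-3/2}p^{-1/2})$ at fixed $\theta$, with no limit $\theta\uparrow1$ needed.

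The point you flag is easily settled. Absorb $1+y+S_k$ into $c(1+\w{V}_{-1}(X_k,y+S_k))$, as in Lemma \ref{Lem6}. The harmonic part is exactly Lemma 2.13 of GLL19 (as used in Lemma \ref{Lem5}), in the unnormalised form $\w{\E}_{-1,i}[e^{-S_k}\w{V}_{-1}(X_k,y'+S_k);\tau_{y'}>k]\le c(1+y'\vee0)e^{y'}k^{-3/2}$. What remains is the term $\w{\E}_{-1,i}[e^{-S_k};\tau_y>k]$. For it, the bound $1+z\le c(1+\w{V}_{-1}(x,z))$ from Statement 3 of Proposition 2.6 gives $c'>0$ with $\w{V}_{-1}(x,w)\ge 1$ for all $x\in\X$ and $w\ge c'$. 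Since $\{\tau_y>k\}\subset\{\tau_{y+c'}>k\}$ and $y+c'+S_k>c'$ on $\{\tau_y>k\}$,
\[
\w{\E}_{-1,i}\left[e^{-S_k};\tau_y>k\right]\le\w{\E}_{-1,i}\left[e^{-S_k}\,\w{V}_{-1}(X_k,y+c'+S_k);\tau_{y+c'}>k\right]\le c\,(1+y\vee0)\,e^{y}\,k^{-3/2}.
\]
So your fallback is not needed. It would be delicate anyway: under $\w{\P}_{-1,i}(\cdot\,|\,\tau_y>p+1)$ the path after time $p+1$ is unconditioned and centred, so $q_\infty=0$ almost surely there.
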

\begin{pf}
	Fix $(i,y)\in\mbox{Supp}\left(\w{V}_{-1}\right)$ and $(j,k)\in\X^2$. For any $n\in\N$ and $\theta\in(0,1)$, set $n=[p/\theta]+1$. Note that $p=[\theta n]$. We have for any $p\in\N$,
\begin{equation}\label{eq6}
	\w{\E}_{-1,i}\left[q^2_p(j);\,X_{p+1}=k\,|\,\tau_y>p+1\right]=I_6+I_7,
\end{equation}
	where 
	$$I_6:=\frac{\w{\E}_{-1,i}\left[q^2_p(j);\,X_{p+1}=k,\,\tau_y>n+1\right]}{\w{\P}_{-1,i}\left(\tau_y>p+1\right)}$$
and
   $$I_7:=\frac{\w{\E}_{-1,i}\left[q^2_p(j);\,X_{p+1}=k,\, p+1<\tau_y\leq n+1\right]}{\w{\P}_{-1,i}\left(\tau_y>p+1\right)}.$$
\end{pf}
On the one hand, by Lemme \ref{Lem7} and Statement 1 of Proposition 2.7 in \cite{GLL19}, we have
\begin{equation}\label{eq7}
	I_6=\frac{\w{\P}_{-1,i}\left(\tau_y>n+1\right)}{\w{\P}_{-1,i}\left(\tau_y>p+1\right)}\,\w{\E}_{-1,i}\left[q^2_p(j);\,X_{p+1}=k\,|\,\tau_y>n+1\right]\stackrel{p\rightarrow+\infty}{\longrightarrow} u_1(i,y)\,\w{\bfn}_{-1}(k)\,\sqrt{\theta}.
\end{equation}
On the other hand, by (\ref{qn-range}) and Statement 1 of Proposition 2.7 in \cite{GLL19}, we have
\begin{equation}\label{eq8}
	I_7\leq\frac{\w{\P}_{-1,i}\left(p+1<\tau_y\leq n+1\right)}{\w{\P}_{-1,i}\left(\tau_y>p+1\right)}=1-\frac{\w{\P}_{-1,i}\left(\tau_y>n+1\right)}{\w{\P}_{-1,i}\left(\tau_y>p+1\right)}\stackrel{p\rightarrow+\infty}{\longrightarrow}1-\sqrt{\theta}.
\end{equation}

Combining (\ref{eq6}), (\ref{eq7}) and (\ref{eq8}), we have for any $\theta\in(0,1)$, 
$$\left|\lim_{p\rightarrow+\infty}\w{\E}_{-1,i}\left[q^2_p(j);\,X_{p+1}=k\,|\,\tau_y>p+1\right]-u_1(i,y)\,\w{\bfn}_{-1}(k)\,\sqrt{\theta}\right|\leq 1-\sqrt{\theta}.$$
Taking the limit as $\theta\rightarrow1$, we thus obtain the result.

\begin{lemma}\label{Lem9}
Assume that the conditions of Theorem \ref{Thm2} are satisfied. Then there exists a positive function $u_2$ on $\X$ such that for any $(i,j,k)\in\X^3$, as $n\rightarrow+\infty$,
$$\w{\E}_{-1,i}\left[q^2_n(j);\,X_{n+1}=k\right]\sim\frac{u_2(i)\,\w{\bfn}_{-1}(k)}{\sqrt{n}}.$$
\end{lemma}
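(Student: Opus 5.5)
The plan is to decompose the expectation according to the value of the minimum of the walk, so that Lemma \ref{Lem8} can be applied. Fix $y>0$ large and split
$$\w{\E}_{-1,i}\left[q^2_n(j);\,X_{n+1}=k\right]=\w{\E}_{-1,i}\left[q^2_n(j);\,X_{n+1}=k,\,\tau_y>n+1\right]+\w{\E}_{-1,i}\left[q^2_n(j);\,X_{n+1}=k,\,\tau_y\leq n+1\right].$$
For the first term I will use Statement 1 of Proposition 2.7 in \cite{GLL19}, which gives $\w{\P}_{-1,i}(\tau_y>n+1)\sim 2\w{V}_{-1}(i,y)/(\sigma\sqrt{2\pi n})$. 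Combining this with Lemma \ref{Lem8}, the first term is asymptotically
$$\frac{2\,\w{V}_{-1}(i,y)\,u_1(i,y)\,\w{\bfn}_{-1}(k)}{\sigma\sqrt{2\pi n}}.$$
(Here I read $u_1(i,j)$ in Lemma \ref{Lem8} as $u_1(i,y)$.)

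The second term is where the walk has gone low. On $\{\tau_y\leq n+1\}$ the walk satisfies $\min_{k\leq n}S_k\leq -y$ up to one step, so by (\ref{qn-exp}), (\ref{exp-qnj}) and (\ref{range-eta}) we have $q_n^{-1}(j)\geq \eta_{\min}e^{-S_k}$ for every $k$. Hence $q^2_n(j)\leq c\,e^{2\min_{k\leq n}S_k}$, with $c$ depending only on $\min_i\eta_i$ and $\max_i|\rho(i)|$. Decomposing over the level $\{-y-l-1<\min_{k\leq n} S_k\leq -y-l\}$ for $l\geq0$, each piece is bounded by $c\,e^{-2(y+l)}\w{\P}_{-1,i}(\tau_{y+l+1}>n)$. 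Statement 2 of Proposition 2.7 then bounds this by $c\,e^{-2(y+l)}(1+y+l)/\sqrt{n}$. Summing over $l$, the second term is at most $c\,(1+y)e^{-2y}/\sqrt{n}$, uniformly in $n$ and $k$.

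Consequently
$$\limsup_n\sqrt n\,\Big|\w{\E}_{-1,i}[q^2_n(j);X_{n+1}=k]-\tfrac{2\w{V}_{-1}(i,y)u_1(i,y)\w{\bfn}_{-1}(k)}{\sigma\sqrt{2\pi}\sqrt n}\Big|\leq c(1+y)e^{-2y}.$$
Letting $y\to\infty$ shows that $\lim_{y\to\infty}\w{V}_{-1}(i,y)u_1(i,y)$ exists: the sequence is Cauchy. This limit is finite by the uniform bound $\w{V}_{-1}(i,y)\leq c(1+y)$ from Proposition 2.6. I will define $u_2(i):=\frac{2}{\sigma\sqrt{2\pi}}\lim_{y\to\infty}\w{V}_{-1}(i,y)u_1(i,y)$, which gives the asymptotic with $\w{\bfn}_{-1}(k)$ factored out.

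A subtle point is that the resulting constant must not depend on $j$. This holds because $u_1$ uses $q_\infty$ rather than $q_\infty(j)$.

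It remains to show $u_2(i)>0$. Fix one $y$ with $(i,y)\in\mbox{Supp}(\w{V}_{-1})$; Proposition 2.6 guarantees such a $y$ exists, and it can be taken large. Since the split above consists of nonnegative terms, the full expectation dominates the first term. That term is asymptotically a positive multiple of $\w{V}_{-1}(i,y)u_1(i,y)/\sqrt n$, which is positive by (\ref{range-u1}). Hence $u_2(i)\geq c\,\w{V}_{-1}(i,y)u_1(i,y)>0$.

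I expect the main obstacle to be the uniform tail bound on the second term. It requires bounding $\w{\P}_{-1,i}(\tau_{y'}>n)$ by $c(1+y')/\sqrt{n}$ uniformly in the starting level $y'$, and handling the factor $e^{2\min S}$ cleanly, including the one-step overshoot that is absorbed into $c$ because $\rho$ is bounded.
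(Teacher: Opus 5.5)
Your proposal is correct and follows essentially the same route as the paper: split on $\{\tau_y>n+1\}$, treat the main part with Lemma~\ref{Lem8} and Statement 1 of Proposition 2.7 in \cite{GLL19}, bound the remainder via $q^2_n(j)\leq c\,e^{2\min_{k\leq n}S_k}$, a level decomposition of the minimum and Statement 2 of Proposition 2.7, and then let $y\to+\infty$. The differences are cosmetic: you absorb the event $\{\tau_y=n+1\}$ through the bounded one-step overshoot instead of proving $\sqrt{n}\,\w{\P}_{-1,i}(\tau_y=n+1)\to0$, and you get existence of $\lim_{y\to+\infty}\w{V}_{-1}(i,y)\,u_1(i,y)$ from a Cauchy argument rather than from monotonicity in $y$; finiteness then follows from the Cauchy property itself, whereas the bound $\w{V}_{-1}(i,y)\leq c(1+y)$ you cite would not by itself give it.
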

\begin{pf}
	Fix $(i,j,k)\in\X^3$. For any $y\in\R$ and $n\in\N$, we have
\begin{align}
	0&\leq \w{\E}_{-1,i}\left[q^2_n(j);\,X_{n+1}=k\right]-\w{\E}_{-1,i}\left[q^2_n(j);\,X_{n+1}=k,\,\tau_y>n+1\right]\notag\\
	&\leq\w{\E}_{-1,i}\left[q^2_n(j);\,\tau_y\leq n+1\right]\label{eq9}
\end{align}
On the one hand, by (\ref{qn-range}), we have
\begin{equation}\label{eq10}
	\w{\E}_{-1,i}\left[q^2_n(j);\,\tau_y\leq n+1\right]\leq \w{\E}_{-1,i}\left[q^2_n(j);\,\tau_y\leq n\right]+\w{\P}_{-1,i}\left(\tau_y=n+1\right).
\end{equation}
By (\ref{qn-range}), we have for any $n\in\N$, $q_n(j)<q_n$. Moreover, since $(q_n)_{n\geq1}$ is non-increasing and from (\ref{qn-exp}), we obtain $$q_n(j)\leq\min_{1\leq m\leq n}q_m\leq e^{\min_{1\leq m\leq n} S_m}$$
and so
$$q^2_n(j)\leq e^{2\,\min_{1\leq m\leq n} S_m}.$$
Therefore, 
\begin{align*}
	\w{\E}_{-1,i}\left[q^2_n(j);\,\tau_y\leq n\right]\leq&\, e^{-2y}\,\w{\E}_{-1,i}\left[e^{2\min_{1\leq m\leq y}\,y+S_m};\,\tau_y\leq n\right]\\
	\leq&\, e^{-2y}\,\sum_{p=0}^{+\infty}e^{-p}\,\w{\P}_{-1,i}\left[-\frac{p+1}{2}<\min_{1\leq m\leq n}\,y+S_m\leq-\frac{p}{2}\,,\,\tau_y\leq n\right]\\
	\leq&\,e^{-2y}\,\sum_{p=0}^{+\infty}e^{-p}\,\w{\P}_{-1,i}\left(\tau_{y+\frac{p+1}{2}}> n\right).
\end{align*}
Using Statement 2 of Proposition 2.7 in \cite{GLL19}, we have
\begin{equation}\label{eq11}
	\w{\E}_{-1,i}\left[q^2_n(j);\,\tau_y\leq n\right]\leq \frac{c\,e^{-2y}\left[1+y\vee0\right]}{\sqrt{n}}.
\end{equation}
By Statement 3 of Proposition 2.6 in \cite{GLL19}, there exists $y_0\in\R$ such that for any $y\geq y_0$, $\w{V}_{-1}(i,y)>0$, i.e. $(i,y)\in\mbox{Supp}\left(\w{V}_{-1}\right)$. Now using Statement 1 of Proposition 2.7 in \cite{GLL19}, we obtain for any $y\geq y_0$, 
\begin{align}
	\lim_{n\rightarrow+\infty}\sqrt{n}\,\w{\P}_{-1,i}\left(\tau_y=n+1\right)=&\lim_{n\rightarrow+\infty}\sqrt{n}\,\w{\P}_{-1,i}\left(\tau_y>n\right)-\lim_{n\rightarrow+\infty}\sqrt{n}\,\w{\P}_{-1,i}\left(\tau_y>n+1\right)\notag\\
	=&\,0. \label{eq12}
\end{align}
Combining (\ref{eq10}), (\ref{eq11}) and (\ref{eq12}), we get for any $y\geq y_0$, 
\begin{equation}\label{eq13}
\sqrt{n}\,\w{\E}_{-1,i}\left[q^2_n(j);\,\tau_y\leq n+1\right]\leq\,c\,e^{-2y}\left[1+y\vee0\right].
\end{equation}

On the other hand, using Lemma \ref{Lem8} and Statement 1 of Proposition 2.7 in \cite{GLL19}, 
\begin{equation}\label{eq14}
	\lim_{n\rightarrow+\infty}\sqrt{n}\,\w{\E}_{-1,i}\left[q^2_n(j);\,X_{n+1}=k,\,\tau_y>n+1\right]=u_1(i,y)\,\w{\bfn}_{-1}(k)\,\frac{\sqrt{2}\,\w{V}_{-1}(i,y)}{\sqrt{\pi}\,\w{\sigma}_{-1}},
\end{equation}
where $\displaystyle\w{\sigma}_{-1}^2:=\w{\bfn}_{-1}\left(\rho^2\right)-\w{\bfn}_{-1}\left(\rho\right)^2+2\sum_{n=1}^{+\infty}\left[\w{\bfn}_{-1}\left(\rho\w{\bfP}^n_{-1}\rho\right)-\w{\bfn}_{-1}\left(\rho\right)^2\right]>0$ (See also (2.37) in \cite{GLL19}). Denote 
$$I(i,j,k)=\liminf_{n\rightarrow+\infty}\sqrt{n}\,\w{\E}_{-1,i}\left[q^2_n(j);\,X_{n+1}=k\right]$$
and 
$$J(i,j,k)=\limsup_{n\rightarrow+\infty}\sqrt{n}\,\w{\E}_{-1,i}\left[q^2_n(j);\,X_{n+1}=k\right].$$
Using (\ref{eq9}), (\ref{eq13}) and (\ref{eq14}), we have for any $y\geq y_0$, 
\begin{align}\label{eq15}
	\frac{\sqrt{2}\,\w{V}_{-1}(i,y)}{\sqrt{\pi}\,\w{\sigma}_{-1}} u_1(i,y)\,\w{\bfn}_{-1}(k)\leq I(i,j,k)\leq& J(i,j,k)\notag\\
	\leq&\frac{\sqrt{2}\,\w{V}_{-1}(i,y)}{\sqrt{\pi}\,\w{\sigma}_{-1}} u_1(i,y)\,\w{\bfn}_{-1}(k)+c\,e^{-2y}\left[1+y\vee0\right]. 
\end{align}
By (\ref{eq14}), observe that $\displaystyle y\mapsto \frac{\sqrt{2}\,\w{V}_{-1}(i,y)}{\sqrt{\pi}\,\w{\sigma}_{-1}}\,u_1(i,y)$ is non-decreasing and by (\ref{eq15}), the function is bounded by $\displaystyle \frac{I(i,j,k)}{\w{\bfn}_{-1}(k)}$. Consequently, the limit
$$u_2(i):=\lim_{y\rightarrow+\infty}\frac{\sqrt{2}\,\w{V}_{-1}(i,y)}{\sqrt{\pi}\,\w{\sigma}_{-1}}\,u_1(i,y)$$
exists. Moreover, by (\ref{range-u1}), we obtain for any $y\geq y_0$,
\begin{equation*}
	u_2(i)\geq \frac{\sqrt{2}\,\w{V}_{-1}(i,y)}{\sqrt{\pi}\,\w{\sigma}_{-1}}\,u_1(i,y)>0.
\end{equation*}
Taking the limit as $y\rightarrow+\infty$ in (\ref{eq15}), we obtain
$$I(i,j,k)=J(i,j,k)=u_2(i)\,\w{\bfn}_{-1}(k).$$ 
\end{pf}

\begin{pot3}
	Define the functions $u(i)$ and $v(j)$, both on $X$, by
	\begin{equation*}
		u(i):=u_2(i)\,\nu_{-1}(i)\quad\text{and}\quad v(j):=\frac{\w{\bfn}_{-1}(j)}{\nu_{-1}(j)}.
	\end{equation*}
	Evidently, $v(j)>0$, for any $j\in\X$. Applying Lemma \ref{Lem9} with $k=j$ and using (\ref{exp-ann-1-1}), we can immediately find the result.
\end{pot3}







\end{document}